\newcommand{\C}{\ensuremath{\mathbb{C}}}
\newcommand{\E}{\ensuremath{\mathrm{e}}}
\newcommand{\N}{\ensuremath{\mathbb{N}}}
\newcommand{\myO}{\ensuremath{\mathcal{O}}}
\newcommand{\R}{\ensuremath{\mathbb{R}}}
\newcommand{\myS}{\ensuremath{\mathcal{S}}}
\newcommand{\Z}{\ensuremath{\mathbb{Z}}}
\newcommand{\bfa}{\ensuremath{\mathbf a}}
\newcommand{\bfb}{\ensuremath{\mathbf b}}
\newcommand{\bfd}{\ensuremath{\mathbf d}}
\newcommand{\bfg}{\ensuremath{\mathbf g}}
\newcommand{\bfk}{\ensuremath{\mathbf k}}
\newcommand{\bfx}{\ensuremath{\mathbf x}}
\newcommand{\bfy}{\ensuremath{\mathbf y}}
\newcommand{\bfz}{\ensuremath{\mathbf z}}
\newcommand{\bfzero}{\ensuremath{\mathbf 0}}
\newcommand{\myone}{\mathbf{1}}
\newcommand{\myF}{\mathcal{F}}
\newcommand{\myint}{\ensuremath{\mathrm{int}}}
\newcommand{\etk}{Erd\"os-Tur\'an-Koksma}
\newcommand{\XOR}{\ensuremath{\texttt{XOR}}} 
\newcommand{\hyper}[1]{H_{#1}}
\newcommand{\hyperfamily}{\mathcal{H}_ C}
\newcommand{\F}{{\mathbb{F}}}
\DeclareMathOperator{\mydig}{dig}   
\title{The hybrid spectral test}
\abstract{
The starting point of this paper is the interplay
between the construction principle of a sequence and the characters of the compact abelian group
that underlies the construction.
In case of the Halton sequence in base $\mathbf b=(b_1, \ldots, b_s)$ in the $s$-dimensional unit cube $[0,1)^s$,
which is an important type of a digital sequence,
this kind of duality principle leads to the so-called $\mathbf b$-adic function system
and provides the basis for the $\mathbf b$-adic method,
which we present in connection with hybrid sequences.
This method employs structural properties of the compact group of $\mathbf b$-adic integers
as well as $\mathbf b$-adic arithmetic to derive tools for the analysis of the uniform distribution of
sequences  in $[0,1)^s$.

We first clarify the point which function systems are needed to analyze digital sequences.
Then, we present the hybrid spectral test in terms of trigonometric-, Walsh-, and  $\mathbf b$-adic functions.
Various notions of diaphony as well
as many figures of merit for rank-1 quadrature rules in Quasi-Monte Carlo integration
and for certain linear types of pseudo-random number generators are included in this measure
of uniform distribution.
Further, discrepancy may be approximated arbitrarily close by suitable versions of the spectral test.
}
\keywords{Uniform distribution of sequences, discrepancy, diaphony, Fourier series, Walsh series, spectral test, pseudorandom number
generation, hybrid sequences}
\begin{document}

   \tableofcontents

\theoremstyle{plain}\newtheorem{Thm}{Theorem}[section]
\newtheorem{Prop}[Thm]{Proposition}
\newtheorem{Lem}[Thm]{Lemma}
\newtheorem{Conj}[Thm]{Conjecture}
\newtheorem{Cor}[Thm]{Corollary}
\newtheorem{Ex}[Thm]{Example}
\newtheorem{Def}[Thm]{Definition}
\newtheorem{Prb}[Thm]{Research Problem}
\newtheorem{Rem}[Thm]{Remark}

\newcommand{\gauss}[3]{\left[ { #1 \atop #2} \right]_{#3}}

\def\supp{\mathrm{supp\,}}
\newcommand{\wt}[1]{\mathrm{wt}(#1)}
\def\gdim{\mathrm{geomdim}}

\section{Introduction}
\label{s:introduction}

This paper is about certain mathematical concepts to analyze the uniform distribution behaviour
 of  sequences on the $s$-dimensional torus $[0,1)^s$.
 We discuss qualitative aspects, by which we understand the study of properties of an infinite sequence
 that induce its uniform distribution in $[0,1)^s$,
 and also quantitative aspects, i.e. the question how to measure the uniform distribution of a finite or
 infinite sequence on the $s$-torus.

A general setting to generate a (finite or infinite) sequence $\omega=(x_n)_{n\ge 0}$ in some output space
$\myO$, like $\myO=[0,1)^s$,
is the following.
We consider a nonempty set $\myS$,
the so-called state space, and a map $T:\myS \rightarrow \myS$,
the state update transformation.
We employ $T$ to generate a (finite or infinite) sequence of states $(s_n)_{n\ge 0}$ in $\myS$.
This sequence is then mapped to a sequence $\omega=(x_n)_{n\ge 0}$ in $\myO$ by an output map
 $\varphi: \myS\rightarrow \myO$,
 by letting $x_n=\varphi(s_n)$, $n\ge 0$.

In numerous applications of this concept,
the sequence $(s_n)_{n\ge 0}$ is constructed by iterating $T$.
We start with some initial state $s_0$ and put $s_n=T^n(s_0)$, $n\ge 0$.
Illustrative examples are linear or inversive congruential pseudorandom number generators (see \cite{Nie92a}),
the well known $(n\alpha)_{n\ge 0}$ sequences (see \cite{Kui74a}),
or the block cipher AES in Output Feedback mode (see \cite[p. 28]{Daemen02a}).

In some other cases, we employ a ring $(R,+, \cdot)$ with unity $1$ and a function
$\psi: R\rightarrow \myS$ to map the sequence $(n1)_{n\ge 0}$ (the so-called ``counter'')
first to a sequence
of states $(\psi(n1))_{n\ge 0}$ in $\myS$.
Then, this sequence is ``encrypted'' by $T$ to give the sequence
$(T\circ\psi(n1))_{n\ge 0}$ in $\myS$.
Finally, this sequence of states is mapped into the output space $\myO$.
This gives $\omega=(\varphi\circ T\circ\psi(n1))_{n\ge 0}$.
Examples of such constructions are explicit inversive congruential pseudorandom number generators
(see \cite{Hel95c,Nie10b,Nie11a,Topuzoglu07a}),
certain digital sequences (see \cite{Dick10a}),
or AES in Counter mode (see \cite[p. 28]{Daemen02a}).
In these examples, we let $R=\Z$, the integral domain of integers.

In all of the cases exhibited above,
we employ some arithmetical operation like addition on the state space $\myS$.
If $\myS$ is a compact abelian group with respect to the chosen operation,
then
we have  the arsenal of abstract harmonic analysis at our disposition (see \cite{Hewitt79a}).
The choice of the compact group $\myS$ determines which function system is suitable
for the analysis of the equidistribution behavior of the sequence $\omega$ in $\myO$,
because the group operation on $\myS$
is intrinsically related to the {\em dual group} $\hat{\myS}$ of $\myS$.

An example of such a suitable match between sequences and function systems
based on this \emph{duality principle} is given by Kronecker sequences or,
in the discrete version, good lattice points,
and the trigonometric functions.
Here,
$\myS=\myO=[0,1)^s$,
and the construction method uses
{\em addition modulo one} on the $s$-torus $[0,1)^s$,
$T(s)=s+\boldsymbol{\alpha}$, with $\boldsymbol{\alpha}\in\R^s$
(see \cite{Drm97a}, \cite[Ch. 5]{Nie92a} and \cite{Slo94a}).
The dual of the compact group $\myS$ may be interpreted as the trigonometric function system.
For the background of these group rotations in ergodic theory,
we refer to the monographs \cite{Parry04a,Walters82a}.

A second example of this duality principle is given by digital  nets and sequences and the Walsh functions.
Here,
{\em addition without carry} of digit vectors comes into play
(see \cite[Ch. 4]{Nie92a} and \cite{Dick10a}).
For example,
for nets and sequences in base 2,
the underlying group $\myS$ is the compact group $\F_2^\infty$
and its dual group is the Walsh function system in base $2$.
The same relation holds for general integer bases $b\ge 2$.

An important type of a digital sequence,
the Halton sequence in integer base $\mathbf b=(b_1, \ldots, b_s)$ on the $s$-torus $[0,1)^s$,
is generated by {\em addition with carry} of digit vectors.
For this reason,
it makes sense to choose as the underlying group $\myS$ the compact group of $\mathbf b$-adic integers
$\Z_{b_1}\times\dots\times\Z_{b_s}$.
Considering the dual group then leads to the $\bfb$-adic function system,
which is the main tool of the $\bfb$-adic method introduced in \cite{Hel09a,Hel10a,Hel10c}.
It will be discussed in Section \ref{s:notation}.

The general duality principle presented before also accommodates for \emph{hybrid sequences},
which are sequences $\omega$ of points in $[0,1)^s$ where
certain coordinates of the points stem from one lower-dimensional
sequence $\omega_1$,
with state space $\myS_1$,
and  the remaining coordinates from a second
lower-dimensional sequence $\omega_2$,
with state space $\myS_2$.
The duality principle leads us to consider \emph{hybrid function systems},
which arise from the product group $\hat{\myS}_1\times \hat{\myS}_2$.
Of course, this idea may be generalized to mixing more than two subsequences into
a hybrid sequence.

This paper is structured as follows.
In Section~\ref{s:addition} we consider the question which function systems will suffice to
analyze digital sequences.
In Section~\ref{s:notation} we introduce the necessary notation and
in Section~\ref{s:spectral} we define the hybrid spectral test and show that it is a measure of the
uniform distribution of a sequence in $[0,1)^s$.
Section~\ref{s:examples} deals with special cases of the spectral test, like diaphonies,
the spectral test for pseudorandom number generators, various figures of merit for integer lattice points,
like they appear in the context of good lattice points and rank-1 lattice rules.
Finally,
we show that the extreme as well as the star discrepancy can be approximated arbitrarily close by
special cases of the hybrid spectral test.

\section{Adding digit vectors}
\label{s:addition}

Digital sequences on the $s$-torus are sequences that arise from operations with digit vectors
in some given integer bases $b_i\ge 2$, $1\le i\le s$.
For the sake of simplicity,
we restrict the following discussion to the one-dimensional case.

As we have seen in Section~\ref{s:introduction},
and as the block ciphers IDEA~\cite{Lai91a} and AES~\cite{Daemen02a}
illustrate in cryptography,
two types of addition of  digit vectors are in use in applications,
addition without carry and addition with carry.
By the duality principle,
the first addition leads to Walsh functions and the second type to $b$-adic
functions
as the proper tools for the analysis of such sequences.

Are these two types of function systems sufficient to study digital sequences?
In algebraic terms,
are there any other possibilities to add digit vectors than addition with or without carry?
If yes, then this would lead to additional types of groups and dual groups and, hence,
to additional function systems.
If not, then the Walsh functions and the $b$-adic functions suffice to analyze
digital sequences.

For details, in particular for the proofs in the following considerations and a refinement
using compositions of positive integers instead of partitions and automorphisms of
certain groups to define additions,
we refer the reader to \cite{Hel12c}.

Let $b\ge 2$ be a fixed integer and let
$\mathcal{A}_b= \{ 0,1, \ldots, b-1\}$ denote the set of $b$-ary digits.
For $m\in \N$, $m\ge 2$,
let $\mathcal{A}_b^m$ stand for
the $m$-fold cartesian product of the set $\mathcal{A}_b$ with itself.
We study the following question:
What are the  binary operations ``$+$'' on the set $\mathcal{A}_b^m$ of  digit vectors of length $m$
such that the pair
$(\mathcal{A}_b^m, +)$ is an abelian group?

In this paper, when we speak of an ``addition on $\mathcal{A}_b^m$'',
we mean a binary operation ``$+$'' on the set $\mathcal{A}_b^m$ of  digit vectors in base $b$ such that the pair
$(\mathcal{A}_b^m, +)$ is an abelian group.
The reader should note that the term ``binary'' has two different meanings here,
which will become clear from the context.
A binary operation on a set $G$ is a map from the cartesian product $G\times G$ into $G$.
Referring to the representation of real numbers in base $b=2$,
the elements of the set $\mathcal{A}_2^m$ are called binary vectors,
and for $m=1$ one speaks of binary digits or bits.

Let us consider the case $b=2$ first.
There are two well known examples for addition of digit vectors,
which will be discussed below.

For $n\in \N$, $n\ge 2$,
let  $\Z/n\Z$ denote the additive group of residue classes modulo $n$.
We identify this cyclic group with the set of integers $\{0,1, \ldots, n-1\}$ equipped with addition modulo $n$.

\begin{example}
We identify $\mathcal{A}_2$ with $\Z/2\Z$.
For $\bfx, \bfy \in \mathcal{A}_2^m$,
$\bfx = (x_0, \ldots, x_{m-1})$ and $\bfy = (y_0, \ldots, y_{m-1})$,
we define
\[
\bfx + \bfy = (x_0\oplus y_0, \ldots, x_{m-1}\oplus y_{m-1}),
\]
where `$\oplus$' denotes addition on $\Z/2\Z$,
$0\oplus 0=1\oplus 1 =0$, and $0\oplus 1=1\oplus 0 =1$.
The pair $(\mathcal{A}_2^m, +)$ is an abelian group.
In fact,
it is isomorphic to the product group $(\Z/2\Z)^m$.
We call this binary operation {\em addition without carry},
or  {\em $\XOR$-addition} of digit vectors.
\end{example}

Every nonnegative integer $k$, $0\le k < 2^m$,
has a unique representation in base $2$ of the form $k=k_0+k_1 2 + \dots +k_{m-1} 2^{m-1}$
with digits $k_j\in \mathcal{A}_2$, $0\le j\le m-1$.

\begin{example}
We identify $\mathcal{A}_2^m$ with the group $\Z/2^m\Z$ as follows.
For $\bfx\in \mathcal{A}_2^m$,
$\bfx = (x_0, \ldots, x_{m-1})$ ,
we define the map $\myint_2: \mathcal{A}_2^m \rightarrow \Z/2^m \Z$,
\[
\myint_2(\bfx) = x_0+ x_1 2+\dots + x_{m-1}2^{m-1}.
\]
Further, let $\mydig_2: \Z/2^m \Z \rightarrow \mathcal{A}_2^m$,
\[
\mydig_2(k) = (k_0,  k_1, \ldots , k_{m-1}),
\]
where $k=k_0 + k_1 2 + \dots + k_{m-1}2^{m-1}$ is the representation of $k$ in base 2.
Finally,
for $\bfx, \bfy \in \mathcal{A}_2^m$,
we define
\[
\bfx + \bfy = \mydig_2 \left( \myint_2(\bfx) + \myint_2(\bfy) \pmod{2^m}\right).
\]

With this binary operation, the pair $(\mathcal{A}_2^m, +)$ is an abelian group.
Clearly,
it is isomorphic to the additive group $\Z/2^m \Z$.
We call this type of binary operation {\em  addition with carry} or {\em integer addition} of digit vectors.
\end{example}

For $m\ge 2$,
our two examples of addition act on non-isomorphic groups,
because $\Z/2^m\Z$ is cyclic and $(\Z/2\Z)^m$ is not.
Apart from these two examples,
are there any other possibilities to define addition on the set $\mathcal{A}_2^m$?
From the Fundamental Theorem for Finite Abelian Groups (see, for example,
\cite[Sec. 10]{Herstein99a}) we obtain the following lemma.
In this context, a {\em partition} \index{partition} of a positive integer $m$ is a
finite sequence $(t_i)_{i=1}^r$, $r\in \N$, of positive integers $t_i$
with the two properties
(i)  $t_1 \ge t_2 \ge  \dots \ge t_r$, and
(ii) $t_1+t_2+ \dots + t_r = m$.

\begin{lemma}
\label{ch:intro:cor:FTFAG}
The non-isomorphic groups of order $2^m$, $m\in \N$, are given by
the product groups
\[
(\Z/2^{t_1}\Z) \times (\Z/2^{t_2}\Z) \times \dots \times (\Z/2^{t_r}\Z),
\]
where $(t_i)_{i=1}^r$ is a partition of $m$.
\end{lemma}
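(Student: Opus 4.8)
The plan is to read the statement as the classification of the finite abelian $2$-groups --- the qualifier ``abelian'' being implicit, consistent with the standing requirement that $(\mathcal{A}_2^m,+)$ be an abelian group --- and to derive it directly from the Fundamental Theorem for Finite Abelian Groups, whose existence and uniqueness parts supply the two directions of the claim. First I would invoke the existence part: every finite abelian group $G$ is isomorphic to a direct product of cyclic groups of prime-power order. When $|G| = 2^m$, each cyclic factor must have $2$-power order, say $\Z/2^{t_i}\Z$ with $t_i\ge 1$, because any odd prime dividing the order of a factor would divide $|G| = 2^m$. Reordering so that $t_1\ge t_2\ge\dots\ge t_r$ and comparing orders gives $2^{t_1+\dots+t_r}=2^m$, hence $t_1+\dots+t_r=m$; that is, $(t_i)_{i=1}^r$ is a partition of $m$. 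Conversely, each partition of $m$ evidently yields a group of the stated shape and of order $2^m$. In the forward direction this shows that every abelian group of order $2^m$ is isomorphic to a group on the list.

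For the assertion that the listed groups are \emph{pairwise} non-isomorphic, I would appeal to the uniqueness part of the theorem: the multiset of elementary divisors (equivalently, the list of cyclic prime-power factors) is an isomorphism invariant. Distinct partitions of $m$ produce distinct multisets $\{2^{t_1},\dots,2^{t_r}\}$, so the corresponding groups cannot be isomorphic. Combined with the first step, this yields a bijection between partitions of $m$ and isomorphism classes of abelian groups of order $2^m$, which is exactly the content of the lemma.

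The only step that is not purely formal is this non-isomorphism claim, and it is where I would concentrate the work if the uniqueness half of the theorem were to be avoided. In that case I would exhibit an explicit numerical isomorphism invariant that recovers the partition. A convenient choice is the sequence $d_k=\dim_{\F_2}\bigl(2^kG/2^{k+1}G\bigr)$, $k\ge 0$. Computing factor by factor, $2^k(\Z/2^{t}\Z)/2^{k+1}(\Z/2^{t}\Z)$ is one-dimensional over $\F_2$ when $t>k$ and trivial otherwise, so $d_k=\#\{i:t_i>k\}$. Thus $(d_k)_{k\ge 0}$ is the conjugate partition of $(t_i)$ and determines $(t_i)$ uniquely. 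Verifying this count is routine, and it converts the non-isomorphism claim into a finite calculation independent of the uniqueness statement of the theorem.
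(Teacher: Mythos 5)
Your proposal is correct and follows essentially the same route as the paper, which gives no argument beyond citing the Fundamental Theorem for Finite Abelian Groups (and deferring details to \cite{Hel12c}); you simply flesh out the existence and uniqueness halves, and rightly note that ``abelian'' is implicit in the statement. The extra invariant $d_k=\dim_{\F_2}(2^kG/2^{k+1}G)$ is a nice optional bonus but not a departure from the paper's approach.
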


Hence, in view of Lemma \ref{ch:intro:cor:FTFAG},
an addition on the set $\mathcal{A}_2^m$
is defined if we choose a partition
$m=t_1+ t_2 +\dots + t_r$ of $m$ and put
\begin{equation}\label{eqn:2-adic}
(\mathcal{A}_2^m, +) \cong (\Z/2^{t_1}\Z) \times (\Z/2^{t_2}\Z) \times \dots \times (\Z/2^{t_r}\Z).
\end{equation}
Here,
the symbol ``$\cong$'' denotes that the two groups are isomorphic.
From the structure of the factors in (\ref{eqn:2-adic}) we obtain the following information.

\begin{theorem}\label{cor:types}
The only two types of binary operations on (sub)vectors of digits that may appear in the group
law of the abelian group $(\mathcal{A}_2^m, +)$ are the following:
\begin{itemize}
\item addition given by finite product groups of the form
$(\Z/2\Z)\times \dots \times (\Z/2\Z)$,
which is what we have called addition without carry, or
\item addition in groups of residues of the form $\Z/2^t\Z$, $t\ge 2$,
which we have called addition with carry.
\end{itemize}
\end{theorem}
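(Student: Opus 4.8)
The plan is to read off the conclusion directly from the structural decomposition already established in equation~(\ref{eqn:2-adic}), which expresses any abelian group structure on $\mathcal{A}_2^m$ as a product of cyclic $2$-groups $\Z/2^{t_i}\Z$ indexed by a partition $(t_i)_{i=1}^r$ of $m$. The theorem asks us to classify the \emph{types of binary operations on subvectors of digits} that can appear in such a group law, and my approach is to argue that the group operation is, up to isomorphism, a \emph{coordinatewise} operation on the cyclic factors: the addition of two elements of $\mathcal{A}_2^m$ is determined by performing the addition of the corresponding $\Z/2^{t_i}\Z$-components independently in each factor. Thus the only local operations that can occur are additions inside a single factor $\Z/2^{t_i}\Z$.

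The key steps, in order, are as follows. First I would fix the isomorphism $\Phi:(\mathcal{A}_2^m,+)\to \prod_{i=1}^r \Z/2^{t_i}\Z$ supplied by~(\ref{eqn:2-adic}), and note that $\Phi$ identifies the chosen addition with the product (coordinatewise) group operation on the right-hand side. Second, I would split into the two cases that the exponents $t_i$ can take. If $t_i=1$, the factor is $\Z/2\Z$, and a block of such factors is exactly a product group of the form $(\Z/2\Z)\times\dots\times(\Z/2\Z)$; by Example~1 this is precisely \XOR-addition, i.e. addition without carry. If $t_i\ge 2$, the factor is $\Z/2^{t_i}\Z$, and by Example~2 the operation on that block is integer addition modulo $2^{t_i}$, i.e. addition with carry. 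Third, I would observe that since the group operation is coordinatewise in the product, no binary operation mixing digits across distinct factors (beyond these two intra-factor additions) can arise in the group law, which gives exactly the dichotomy claimed.

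The main obstacle, and the only genuinely non-routine point, is making precise what is meant by a ``binary operation on a subvector of digits that appears in the group law.'' The subtlety is that the isomorphism $\Phi$ is a \emph{relabelling} of the digit coordinates, and a given addition on $\mathcal{A}_2^m$ need not respect the standard coordinate positions; so one must be careful to say that the statement is true \emph{up to the choice of coordinates adapted to the partition}. I would therefore phrase the argument so that the classification is about the isomorphism type of the operation, invoking~(\ref{eqn:2-adic}) to reduce any abelian addition to the canonical product form, and then reading off the two intra-factor operation types from Examples~1 and~2. Once this reduction is in place, the remainder is immediate, since the Fundamental Theorem for Finite Abelian Groups has already done the structural work in Lemma~\ref{ch:intro:cor:FTFAG}.
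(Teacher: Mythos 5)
Your proposal is correct and follows essentially the same route as the paper, which derives the theorem directly from the decomposition in~(\ref{eqn:2-adic}) obtained via the Fundamental Theorem for Finite Abelian Groups (Lemma~\ref{ch:intro:cor:FTFAG}), reading off the two factor types $\Z/2\Z$ and $\Z/2^t\Z$, $t\ge 2$; the paper itself gives no further argument and defers details to an external reference. Your explicit remark that the classification holds only up to a coordinate relabelling adapted to the partition is a worthwhile clarification of a point the paper leaves implicit.
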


Theorem \ref{cor:types} directly generalizes from base 2 to arbitrary prime bases $p$.
In the case of a composite base $b$,
we also have the equivalent of  ( \ref{eqn:2-adic}),
as well as some additional direct products arising from the factorization of $b$ into prime powers.
Even in the latter cases, only two types of addition of (sub)vectors of digits appear,
addition with or without carry.

\begin{remark}
We have seen that there exist \emph{only two types} of addition for digit vectors,
while there exist many different variants of this binary operation,
for example by changing the underlying partition sequence.
As a consequence, from the duality principle  only two types of function systems arise,
the Walsh functions and the $b$-adic functions.
For those digits that are added without carry, the Walsh system applies,
and for those digits that are added with carry,  the $b$-adic system of functions is appropriate.
These two function systems cover all possible cases.
\end{remark}

\section{Notation}\label{s:notation}
Throughout this paper,
$b$ denotes a positive integer, $b\ge 2$,
and $\mathbf{b}=(b_1, \ldots, b_s)$ stands for a vector of not necessarily distinct
integers $b_i\ge 2$, $1\le i\le s$.
$\mathbb{N}$ represents the positive integers,
and we put $\mathbb{N}_0=\mathbb{N}\cup \{0\}.$

The underlying space is the $s$-dimensional torus
$\mathbb{R}^s/\mathbb{Z}^s$,
which will be identified with the half-open interval $[0,1)^s$.
Haar measure on the $s$-torus $[0,1)^s$ will be denoted by $\lambda_s$.
We put $e(y)=\E^{2\pi \mathrm{i} y}$ for $y\in\mathbb{R}$,
where $\mathrm{i}$ is the imaginary unit.

We will use the standard convention that empty sums have the value 0 and empty products
value 1.

For a nonnegative integer $k$, let
$k =\sum_{j\ge0} k_j\, b^j, k_j \in \{0,1,\ldots ,b-1\},$
be the unique $b$-adic representation of $k$ in base $b$. With the
exception of at most finitely many indices $j$, the digits $k_j$ are
equal to 0.

Every real number $x\in [0,1)$ has a  representation in base $b$
of the form
$x = \sum_{j\ge 0} x_j \,b^{-j-1},$
with digits
$x_j\in \{0,1, \ldots, b-1\}.$
If $x$ is a {\em $b$-adic rational},
which means that $x=ab^{-g}$, $a$ and $g$ integers, $0\le a< b^g$,
$g\in \mathbb{N}$,
and if $x\ne 0$,
then there exist two such representations.

The $b$-adic representation of $x$ is uniquely determined
under the condition that $x_j \ne b-1$ for infinitely many $j$.
In the following,
we will call this particular representation the {\em regular} ($b$-adic) representation
of $x$.

Let $\mathbb{Z}_b$ denote the compact group of the $b$-adic integers. We refer
the reader to \cite{Hewitt79a,Mahler81a} for details.
An element $z$ of $\mathbb{Z}_b$ will be written as a formal sum
$z=\sum_{j\ge 0} z_j\, b^j,$
with digits $z_j\in \{0,1,\ldots, b-1  \}$.
The set $\mathbb{Z}$ of integers  is embedded in $\mathbb{Z}_b$.
If $z\in \mathbb{N}_0$,
then at most finitely many digits $z_j$ are different from 0.
If $z\in \mathbb{Z}$, $z< 0$,
then at most finitely many digits $z_j$ are different from $b-1$.
In particular,
$-1 = \sum_{j\ge 0} (b-1)\, b^j.$

We recall the following concepts from \cite{Hel10a,Hel10c,Hel11a}.
\begin{definition}
The map $\varphi_b: \mathbb{Z}_b \rightarrow [0,1)$,
given by
$\varphi_b(\sum_{j\ge 0} z_j\, b^j) = \sum_{j\ge 0} z_j\, b^{-j-1}\pmod{1}$,
will be called the {\em $b$-adic Monna map}.
\end{definition}

The restriction of $\varphi_b$ to $\mathbb{N}_0$ is often called the {\em radical-inverse function}
in base $b$.
The Monna map is surjective, but not injective.
It may be inverted in the following sense.
\begin{definition}
We define the {\em pseudoinverse} $\varphi^+_b$ of the $b$-adic Monna map $\varphi_b$  by
\[
\varphi^+_b: [0,1) \rightarrow \mathbb{Z}_b, \quad
\varphi^+_b(\sum_{j\ge 0} x_j\, b^{-j-1}) = \sum_{j\ge 0} x_j\, b^{j}\;,
\]
where $\sum_{j\ge 0} x_j\, b^{-j-1}$ stands for the regular $b$-adic
representation of the element $x\in [0,1)$.
\end{definition}

The image of  $[0,1)$ under $\varphi^+_b$ is the set  $\mathbb{Z}_b\setminus (-\mathbb{N})$.
Furthermore, $\varphi_b\circ \varphi^+_b$ is the identity map on $[0,1)$,
and $\varphi^+_b\circ \varphi_b$ the identity on $\mathbb{N}_0 \subset \mathbb{Z}_b$.
In general,
$z\neq \varphi^+_b(\varphi_b(z))$,  for $z\in \mathbb{Z}_b$.
For example,
if $z=-1$,
then $\varphi^+_b(\varphi_b(-1))=\varphi^+_b(0)=0\neq -1$.

It has been shown in \cite{Hel11a} that the dual group
 $\hat{\mathbb{Z}}_b$
can be written in the form
$\hat{\mathbb{Z}}_b= \{\chi_k: k\in \mathbb{N}_0 \},$
where
$\chi_k: \mathbb{Z}_b\rightarrow \{ c\in \mathbb{C}: |c|=1 \}$,
$\chi_k( \sum_{j\ge 0} z_j b^j ) =     e(\varphi_b(k)(z_0+z_1b+\cdots))$.
We note that $\chi_k$ depends only on a finite number
of digits of $z$ and, hence, this function is well defined.

As in \cite{Hel10a},
we employ the function $\varphi_b^+$ to lift the characters $\chi_k$ to the torus.

\begin{definition}
For $k\in \N_0$,
let
$\gamma_k: [0,1) \rightarrow \{ c\in \mathbb{C}: |c|=1 \}$,
$\gamma_k(x)=     \chi_k(\varphi^+_b(x))$,
denote the $k$th $b$-adic function.
We put $\Gamma_b=\{\gamma_k:  k\in \mathbb{N}_0  \}$
and call it the $b$-adic function system on $[0,1)$.
\end{definition}

There is an obvious generalization of the preceding notions to the
higher-dimensional case.
Let $\mathbf{b}=(b_1, \ldots, b_s)$ be a vector of
not necessarily distinct integers $b_i\ge 2$,
let ${\mathbf{x}}= (x_1, \ldots, x_s)\in [0,1)^s$,
let $\mathbf{z}=(z_1, \ldots, z_s)$ denote an element of the compact product group
$\mathbb{Z}_\mathbf{b}= \mathbb{Z}_{b_1}\times \cdots \times \mathbb{Z}_{b_s}$
of $\mathbf{b}$-adic integers,
and let ${\mathbf{k}}= (k_1, \ldots, k_s)\in \mathbb{N}_0^s$.
We define
$\varphi_{\mathbf{b}}(\mathbf{z}) = (\varphi_{b_1}(z_1), \ldots, \varphi_{b_s}(z_s))$,
and
$\varphi^+_{\mathbf{b}}(\mathbf{x}) = (\varphi^+_{b_1}(x_1), \ldots, \varphi^+_{b_s}(x_s))$.

Let
$\chi_{\mathbf{k}}(\mathbf{z}) = \prod_{i=1}^s \chi_{k_i}(z_i)$,
where $\chi_{k_i} \in \hat{\mathbb{Z}}_{b_i}$,
and define
$\gamma_{\mathbf{k}}({\mathbf{x}}) = \prod_{i=1}^s \gamma_{k_i}(x_i)$,
where $\gamma_{k_i} \in \Gamma_{b_i}$, $1\le i\le s$.
Then
$\gamma_{\mathbf{k}} = \chi_{\mathbf{k}}\circ \varphi^+_{\mathbf{b}}$.
Let $\Gamma_{\mathbf{b}}^{(s)} = \{ \gamma_{\mathbf{k}}:
\mathbf{k} \in \mathbb{N}_0^s \}$
denote the {\em $\mathbf{b}$-adic function system} in dimension $s$.

The dual group $\hat{\Z}_\bfb$ is an orthonormal basis of
the Hilbert space $L^2(\Z_\bfb)$.
A rather elementary proof of this result is given in \cite[Theorem 2.12]{Hel11a}.

For the Walsh functions defined below,
we refer the reader to \cite{Dick10a,Hel93a,Hel98a} for
elementary properties of these functions and to
\cite{Sch90a} for the background in harmonic analysis.

\begin{definition}
For $k\in \mathbb{N}_0$,
$k= \sum_{j\ge 0} k_j b^j$,
and $x\in [0,1)$, with regular $b$-adic representation
$x= \sum_{j\ge 0} x_j b^{-j-1}$,
the {\em $k$th Walsh function in base $b$} is defined by
$w_k(x) = e( (\sum_{j\ge 0} k_j x_j)/b)$.
For ${\mathbf{k}}\in \mathbb{N}_0^s$, ${\mathbf{k}}=(k_1, \ldots, k_s)$,
and ${\mathbf{x}}\in [0,1)^s$,
${\mathbf{x}}=(x_1, \ldots, x_s)$,
we define the {\em ${\mathbf{k}}$th Walsh function $w_{\mathbf{k}}$
in base $\mathbf{b}=(b_1, \ldots, b_s)$} on  $[0,1)^s$ as the following product:
$w_{\mathbf{k}}(\mathbf{x}) = \prod_{i=1}^s
	w_{k_i}(x_i)$,
where $w_{k_i}$ denotes the $k_i$th Walsh function in base $b_i$,
$1\le i\le s$.
The Walsh function system in base $\mathbf{b}$
in dimension $s$ is denoted by
$\mathcal{W}_{\mathbf{b}}^{(s)} = \{w_{\mathbf{k}}:   \mathbf{k}\in \mathbb{N}_0^s\}$.
\end{definition}

The trigonometric function system defined below
is the classical function system in the theory of uniform distribution of sequences
(see the monograph \cite{Kui74a}).

\begin{definition}
Let $k\in \mathbb{Z}$. The $k$th {\em trigonometric function}
$e_k$ is defined as $ e_k: [0,1) \rightarrow \mathbb{C}$, $ e_k(x)
= e(kx)$. For $\bfk=(k_1, \ldots, k_s)\in \mathbb{Z}^s$, the {\em
$\bfk$th trigonometric function} $e_{\bfk}$ is defined as
$e_{\bfk}: [0,1)^s \rightarrow \mathbb{C}$, $e_{\bfk}({\bfx}) =
\prod_{i=1}^s e(k_ix_i)$, $\bfx=(x_1, \ldots, x_s)\in [0,1)^s$.
The {\em trigonometric function system} in dimension $s$ is
denoted by $\mathcal{T}^{(s)} = \{e_{\bfk}: \bfk\in
\mathbb{Z}^s\}$.
\end{definition}

The following presentation complements the concepts discussed in \cite{Hel10c}.
As will become clear,
any finite number of factors can be accomodated.
For given dimensions $s_1, s_2$ and $s_3$,
with $s_i \in \mathbb{N}_0$, not all equal to 0,
put $s=s_1+s_2+s_3$ and write a point $\bfy\in \R^s$ in the form $\bfy=(\bfy^{(1)}, \bfy^{(2)}, \bfy^{(3)})$
with components $\bfy^{(j)}\in \R^{s_j}$, $j=1,2,3$.
Let us fix two vectors of  bases $\mathbf{b}^{(1)}=(b_1, \ldots, b_{s_1})$, and
$\mathbf{b}^{(2)}=(b_{s_1+1}, \ldots, b_{s_1+s_2})$
with not necessarily distinct integers $b_i\ge 2$.
Let $\mathbf{k} = (\mathbf{k}^{(1)}, \mathbf{k}^{(2)}, \mathbf{k}^{(3)})$,
with components $\mathbf{k}^{(1)}\in \mathbb{N}_0^{s_1}$,
$\mathbf{k}^{(2)}\in \mathbb{N}_0^{s_2}$, and $\mathbf{k}^{(3)}\in \Z^{s_3}$.
The tensor product
$\xi_{\mathbf{k}}= w_{\mathbf{k}^{(1)}} \otimes \gamma_{\mathbf{k}^{(2)}} \otimes e_{\mathbf{k}^{(3)}}$,
where
$w_{\mathbf{k}^{(1)}}\in \mathcal{W}_{\mathbf{b}^{(1)}}^{(s_1)}$,
$\gamma_{\mathbf{k}^{(2)}}\in \Gamma_{\mathbf{b}^{(2)}}^{(s_2)}$,
and $e_{\mathbf{k}^{(3)}}\in \mathcal{T}^{(s_3)}$,
defines a function $\xi_{\mathbf{k}}$ on the $s$-dimensional unit cube,
\[
\xi_{\mathbf{k}}: [0,1)^s \rightarrow \mathbb{C},
\quad \xi_{\mathbf{k}}(\mathbf{x}) =
w_{\mathbf{k}^{(1)}}(\mathbf{x}^{(1)})  \gamma_{\mathbf{k}^{(2)}}(\mathbf{x}^{(2)})
    e_{\mathbf{k}^{(3)}}(\mathbf{x}^{(3)})\;,
\]
where $\mathbf{x}= (\mathbf{x}^{(1)}, \mathbf{x}^{(2)}, \mathbf{x}^{(3)})\in [0,1)^s$.

\begin{definition}
Let $s_1, s_2, s_3\in \N_0$, not all $s_i$ equal to 0, and put
$s=s_1+s_2+s_3$.
The family of functions
\[
\mathcal{W}_{\mathbf{b}^{(1)}}^{(s_1)}\otimes \Gamma_{\bfb^{(2)}}^{(s_2)} \otimes \mathcal{T}^{(s_3)}=
    \{ \xi_\bfk,
    \bfk= (\bfk^{(1)},\bfk^{(2)}, \bfk^{(3)})\in \N_0^{s_1}\times \N_0^{s_2}\times \Z^{s_3}\},
\]
is called a {\em hybrid function system} on $[0,1)^s$.
\end{definition}

\begin{remark}
It follows from \cite[Theorem 1 and Corollary 4]{Hel10c} and the techniques exhibited in
\cite{Hel11a}
that such hybrid function systems are an orthonormal basis of $L^2([0,1)^s)$.
\end{remark}

\section{The hybrid spectral test}\label{s:spectral}

\begin{remark}
All of the following results remain valid if we change the order of the factors
in the hybrid function system,
as it will become apparent from the proofs below.
In particular, out of the given $s$ coordinates,
we may select arbitrary $s_1$ coordinates and assign to them the Walsh system
$\mathcal{W}_{\mathbf{b}^{(1)}}^{(s_1)}$ in some base $\bfb^{(1)}$,
treat $s_2$ of the remaining $s -s_1$ coordinates with a $\bfb^{(2)}$-adic system $\Gamma_{\bfb^{(2)}}^{(s_2)}$,
and use the system $\mathcal{T}^{(s_3)}$ for the final $s_3$ coordinates.
\end{remark}

\begin{definition}
Let $s\in \N$.
By an  \emph{$s$-dimensional index set} $\Lambda$ we understand one of the
additive semigroups $(\Z^s, +)$, $(\N_0^s, +)$, and $(\N^s, +)$,
or finite direct products of these semigroups
such that  the dimensions of the factors add up to $s$.

Let $\Lambda^*$ denote the index set $\Lambda\setminus \{\mathbf{0}\}$.
\end{definition}

Examples of $s$-dimensional index sets are direct products of the form
$\N_0^{s_1}\times \N_0^{s_2}\times \Z^{s_3}$,
where $s=s_1+s_2+s_3$,
with $s_1, s_2, s_3\in \N_0$, not all $s_i$ equal to 0,
as they appear in hybrid function systems.

If $\omega = ({\bf x}_n)_{n\ge 0}$ is
a -possibly finite- sequence in $[0,1)^s$ with at least $N$ elements,
and if $f:\, [0,1)^s$ $\to$ ${\C}$,
we define
\[
S_N(f,\omega) = \frac{1}{N} \sum_{n=0}^{N-1} f({\bf x}_n).
\]

\begin{definition}
Let $\Lambda$ be an $s$-dimensional index set.
A subclass $\myF=\{ \xi_\bfk: \bfk\in \Lambda  \}$
of the class of Riemann integrable functions on $[0,1)^s$
is called \emph{a uniform distribution determining (u.d.d.) function system on $[0,1)^s$}
if the functions $\xi_\bfk$ are normalized in the sense that
$||\xi_\bfk||_\infty = \sup\{|\xi_\bfk(\bfx)|: \bfx\in [0,1)^s\}\le 1$ for all $\bfk$,
and $\int_{[0,1)^s} \xi_\bfk d\lambda_s = 0$ for all $\bfk\in \Lambda^*$,
and if, for any sequence $\omega$ in $[0,1)^s$,
the property
\[
\forall \bfk\in \Lambda^*: \lim_{N\to \infty} S_N(\xi_\bfk, \omega) =0
\]
implies the uniform distribution of $\omega$ in $[0,1)^s$.
\end{definition}

Examples of u.d.d. classes of functions on $[0,1)^s$ are
the hybrid function systems
$\mathcal{W}_{\mathbf{b}^{(1)}}^{(s_1)}\otimes \Gamma_{\bfb^{(2)}}^{(s_2)} \otimes \mathcal{T}^{(s_3)}$.
This follows from the hybrid Weyl Criterion \cite[Theorem 1]{Hel10c}, whose proof is easily adapted
to provide for the non-prime bases $b_i$ we allow here in the factor $\Gamma_{\bfb^{(2)}}^{(s_2)}$.

\begin{definition}\label{def:weightfunction}
Let $||\cdot||$ be an arbitrary norm on $\R^s$ and let $\Lambda$ be an $s$-dimensional index set.
We call a real-valued function $\rho$ a \emph{weight function} on $\Lambda$
if, for all $\bfk\in \Lambda$, $\rho(\mathbf{k}) > 0$, and if
for all $\epsilon>0$, there exists a positive real number $K_0=K_0(\epsilon)$ such that
$\rho(\bfk)<\epsilon$ for all $\bfk\in\Lambda$ with $||\bfk||>K_0$.

\end{definition}

With a u.d.d. function system $\myF=\{ \xi_\bfk: \bfk\in \Lambda  \}$ on $[0,1)^s$
and a weight function $\rho$ on $\Lambda$ we may associate the array of weighted functions
\begin{equation*}\label{s:spectral:eqn:array}
 \left( \rho(\bfk) \xi_\bfk \right)_{\bfk\in \Lambda}.
\end{equation*}

The operator $S_N(\cdot, \omega)$ is linear, hence we may write
\[
S_N( \left( \rho(\bfk) \xi_\bfk \right)_{\bfk\in \Lambda}, \omega)
    = \left( \rho(\bfk) S_N(\xi_\bfk, \omega) \right)_{\bfk\in \Lambda}.
\]

\begin{definition}
\label{definition:generalspectraltest}
Let $\myF=\{ \xi_\bfk: \bfk\in \Lambda  \}$ be a u.d.d. function system on $[0,1)^s$, and let
$\rho$ be a weight function on $\Lambda$.
For a given sequence $\omega = (\mathbf{x}_n)_{n\ge 0}$ in $[0,1)^s$,
the \emph{spectral test} $\sigma_N(\omega)$
of the first $N$ elements of $\omega$,
with respect to $\myF$ and $\rho$,
is defined as
\begin{align*}
\sigma_N(\omega) &= || \left( \rho(\bfk)  \right)_{\bfk\in \Lambda^*}||_\infty^{-1}\
    || \left( \rho(\bfk) S_N(\xi_\bfk, \omega) \right)_{\bfk\in \Lambda^*} ||_\infty \\
    &= \sup_{\bfk\in \Lambda^*} \{ \rho(\bfk) \}^{-1}
    \sup_{\bfk\in \Lambda^*} \left\{
        \rho(\bfk) \left| S_N( \xi_\mathbf{k}, \omega) \right| \right\}.
\end{align*}
Let $\alpha>1$ be a given real number.
If the weight function $\rho$ fulfills the additional condition
\[
\sum_{\bfk \in \Lambda} \rho(\bfk)^\alpha < \infty,
\]
then the $L^\alpha$-\emph{diaphony} $F_N^{(\alpha)}(\omega)$
of the first $N$ elements of $\omega$,
with respect to $\myF$ and $\rho$,
is defined as
\begin{align*}
F_N^{(\alpha)}(\omega) &= ||  \left( \rho(\bfk) \right)_{\bfk\in \Lambda^*}||_\alpha^{-1}
    \ || \left( \rho(\bfk) S_N(\xi_\bfk, \omega) \right)_{\bfk\in \Lambda^*} ||_\alpha\\
   &= \left( \sum_{\bfk\in \Lambda^*} \rho(\bfk)^\alpha\right)^{-1/\alpha}\
    \left( \sum_{\bfk\in \Lambda^*} \rho(\bfk)^\alpha \left| S_N( \xi_\mathbf{k}, \omega) \right|^\alpha\right)^{1/\alpha}.
\end{align*}
\end{definition}

We note that the fact $|S_N(\xi_\bfk, \omega)|\le 1$ implies that the spectral test as well as
diaphony are normalized: $0\le \sigma_N(\omega)\le 1$, and  $0\le F_N^{(\alpha)}(\omega)\le 1$.

\begin{theorem}
\label{thm:spectraltest}
Let $\myF$, $\rho$ and $\sigma_N(\omega)$ be as in Definition \ref{definition:generalspectraltest}.
Then
\begin{enumerate}
\item\label{item:sigmaN-1}
The quantity $\sigma_N(\omega)$ is a maximum.
\item\label{item:sigmaN-2}
The sequence $\omega$ is uniformly distributed modulo one if and only if
\[
\lim_{N\to \infty} \sigma_N(\omega)= 0.
\]
\end{enumerate}

\end{theorem}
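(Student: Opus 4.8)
The plan is to prove the two assertions separately, both of which rest on the normalization $|S_N(\xi_\bfk,\omega)|\le 1$ (coming from $\|\xi_\bfk\|_\infty\le 1$) and on the decay property of the weight function $\rho$ from Definition~\ref{def:weightfunction}.

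For part~\eqref{item:sigmaN-1}, I would first write $M=\sup_{\bfk\in\Lambda^*}\rho(\bfk)$ and show that the inner supremum
\[
\sup_{\bfk\in\Lambda^*}\bigl\{\rho(\bfk)\,|S_N(\xi_\bfk,\omega)|\bigr\}
\]
is actually attained. The key observation is that, since $|S_N(\xi_\bfk,\omega)|\le 1$, every term in this supremum is bounded above by $\rho(\bfk)$, and the defining property of the weight function guarantees that $\rho(\bfk)$ becomes arbitrarily small outside a ball: for $\epsilon>0$ there is $K_0$ with $\rho(\bfk)<\epsilon$ whenever $\|\bfk\|>K_0$. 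Hence only finitely many indices $\bfk\in\Lambda^*$ with $\|\bfk\|\le K_0$ can contribute a value of size comparable to $M$, while all the remaining indices contribute strictly less than $\epsilon$. Choosing $\epsilon$ small enough that some index inside the ball already achieves a value exceeding $\epsilon$ (which is possible because the supremum is positive — or, in the degenerate all-zero case, the supremum is $0$ and is trivially attained at any index), the supremum reduces to a maximum over a finite set, and a maximum over a finite set is attained. This gives assertion~\eqref{item:sigmaN-1}.

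For part~\eqref{item:sigmaN-2}, I would exploit the fact that $\myF$ is a u.d.d.\ function system, so by definition uniform distribution of $\omega$ is equivalent to $\lim_{N\to\infty}S_N(\xi_\bfk,\omega)=0$ for every $\bfk\in\Lambda^*$. The forward direction is the easy one: if $\omega$ is uniformly distributed, then each $S_N(\xi_\bfk,\omega)\to 0$; I would then bound $\sigma_N(\omega)$ by splitting $\Lambda^*$ into the finite set $\{\|\bfk\|\le K_0\}$, where finitely many terms each tend to $0$, and the tail $\{\|\bfk\|>K_0\}$, where $\rho(\bfk)/M<\epsilon$ forces each weighted term below $\epsilon$ regardless of $N$. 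An $\epsilon/2$ argument then yields $\sigma_N(\omega)\to 0$.

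The converse direction, $\sigma_N(\omega)\to 0\implies$ uniform distribution, is where the main subtlety lies. Here I would observe that for any fixed $\bfk_0\in\Lambda^*$ we have
\[
\rho(\bfk_0)\,|S_N(\xi_{\bfk_0},\omega)|\le M\,\sigma_N(\omega),
\]
since the weighted term at $\bfk_0$ is dominated by the supremum defining $\sigma_N$. Because $\rho(\bfk_0)>0$ is a fixed positive constant, $\sigma_N(\omega)\to 0$ immediately forces $S_N(\xi_{\bfk_0},\omega)\to 0$, and as $\bfk_0\in\Lambda^*$ was arbitrary, the u.d.d.\ property delivers uniform distribution of $\omega$. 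The step I expect to require the most care is the first one (attainment of the maximum), since one must handle the possibility that the supremum is zero and must argue cleanly that the weight decay confines all near-extremal indices to a finite set; the equidistribution equivalence itself is then a direct consequence of the positivity of each individual weight combined with the u.d.d.\ hypothesis.
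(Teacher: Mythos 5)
Your proposal is correct and follows essentially the same route as the paper's proof: split $\Lambda^*$ into the finite set $\{\|\bfk\|\le K_0\}$ and the tail, use the decay of $\rho$ to control the tail, and invoke the u.d.d.\ property together with the positivity of each fixed weight $\rho(\bfk_0)$ for the equivalence in part~(2). The only cosmetic difference is that you allow the degenerate case $\sigma_N(\omega)=0$ and note trivial attainment, whereas the paper argues that $\sigma_N(\omega)>0$ always holds; both treatments are fine.
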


\begin{proof}
The proof generalizes the arguments in \cite[Sec. 5.2]{Hel98a}.
For an arbitrary positive integer $K$,
let
\[
A_{N,K}= \sup \left\{
       \rho(\bfk) \left| S_N( \xi_\mathbf{k}, \omega) \right|:
        0<||\mathbf{k}|| \le K  \right\},
\]
and
\[
B_{N,K}= \sup \left\{
        \rho(\bfk) \left| S_N( \xi_\mathbf{k}, \omega) \right|:
        ||\mathbf{k}|| > K   \right\}.
\]
Clearly,
\begin{equation}\label{eqn:sigmaN}
\sigma_N(\omega) = \max \left\{ A_{N,K}, B_{N,K}\right\}.
\end{equation}
We have $\sigma_N(\omega)> 0$.
Otherwise, all terms $S_N(\xi_\bfk, \omega)$,
$\bfk\in \Lambda^*$, would be equal to zero,
which is impossible.
Hence,  there exists $\delta > 0$ such that
$\delta < \sigma_N(\omega)$,
and an index $K_0 = K_0(\delta)\in \mathbb{N}$ such that
for all $\mathbf{k}$ with $	||\mathbf{k}||> K_0$ we have $\rho(\bfk) < \delta$.
This implies
\[
B_{N,K_0}
\le
\sup \left\{
        \rho(\mathbf{k}): ||\mathbf{k}|| > K_0 \right\}
\le \delta < \sigma_N(\omega).
\]
Hence, $\sigma_N(\omega) = A_{N,K_0}$.
We observe that the set $\{\mathbf{k}: ||\mathbf{k}|| \le K_0  \}$ is finite.
In this context,
we recall that all norms on $\R^s$ are equivalent.
This proves (\ref{item:sigmaN-1}).

In order to prove (\ref{item:sigmaN-2}),
suppose first that $\lim_{N\to \infty} \sigma_N(\omega) = 0$.
This implies that $\lim_{N\to \infty}S_N(\xi_\mathbf{k}, \omega) = 0$
for all $\mathbf{k} \in \Lambda^*$.
The class $\myF$ is u.d.d.,
hence $\omega$ is uniformly distributed in $[0,1)^s$.

To prove the converse,
assume that $\omega$ is uniformly distributed in $[0,1)^s$.
For any $\epsilon > 0$, there exists a positive integer
$K_0 = K_0(\epsilon)$ such that $\rho(\bfk) < \epsilon$
for all $\mathbf{k}$ with $||\mathbf{k}|| > K_0$.
As in the proof of part (\ref{item:sigmaN-1}),
this gives $B_{N,K_0}\le \epsilon$, and, due to (\ref{eqn:sigmaN}),
\[
\sigma_N(\omega) \le A_{N,K_0} + \epsilon.
\]
The number $A_{N,K_0}$ is a maximum and the class $\myF$ is u.d.d..
This implies the existence of $N_0=N_0(\epsilon)\in \N$ with the property
\[
\forall\ N \ge N_0(\epsilon):
    \ A_{N,K_0} = \max \left\{
        \rho(\bfk) \left| S_N( \xi_\mathbf{k}, \omega) \right|:
        0< ||\mathbf{k}|| \le K_0
        \right\}
< \epsilon.
\]

We deduce the relation
$$
\forall\ N \ge N_0(\epsilon): \ \sigma_N(\omega) < 2 \epsilon.
$$
This proves the theorem.
\end{proof}

\begin{corollary}
Let $\myF$, $\rho$ and $\sigma_N(\omega)$ be as in Definition \ref{definition:generalspectraltest}
and let $K$ denote an arbitrary positive integer.
Then we have the following inequality of \etk \ for the spectral test:
\begin{equation*}
\sigma_N(\omega) \le \max \Big\{
    \max_{0<||\mathbf{k}|| \le K} \left\{
        \rho(\bfk) \left| S_N( \xi_\mathbf{k}, \omega) \right| \right\},
        \sup_{||\bfk||>K}\left\{\rho(K)\right\}
        \Big\}.
\end{equation*}
\end{corollary}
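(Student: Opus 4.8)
The plan is to extract the inequality directly from the machinery already assembled in the proof of Theorem~\ref{thm:spectraltest}, rather than to start afresh. Fix the given positive integer $K$ and recall the two quantities $A_{N,K}$ and $B_{N,K}$ introduced there, together with the identity~(\ref{eqn:sigmaN}), namely $\sigma_N(\omega)=\max\{A_{N,K},B_{N,K}\}$. This identity holds because $\Lambda^*$ splits into the disjoint pieces $\{\bfk:0<||\bfk||\le K\}$ and $\{\bfk:||\bfk||>K\}$, and the supremum over a union equals the maximum of the suprema over the parts. It therefore suffices to bound each of the two terms by the corresponding entry on the right-hand side of the asserted inequality.

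For the first term I would invoke the observation used in the proof of part~(\ref{item:sigmaN-1}), that the index set $\{\bfk\in\Lambda^*:\ ||\bfk||\le K\}$ is finite; this rests on the equivalence of all norms on $\R^s$ together with the discreteness of $\Lambda$ inside $\Z^s$. Consequently the supremum defining $A_{N,K}$ is actually attained, so that $A_{N,K}=\max_{0<||\bfk||\le K}\{\rho(\bfk)\,|S_N(\xi_\bfk,\omega)|\}$, which is precisely the first argument of the outer maximum in the statement.

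For the second term the key input is the normalization built into the definition of a u.d.d. function system: since $||\xi_\bfk||_\infty\le 1$, the averages satisfy $|S_N(\xi_\bfk,\omega)|\le 1$, whence $\rho(\bfk)\,|S_N(\xi_\bfk,\omega)|\le\rho(\bfk)$ for every $\bfk$. Taking the supremum over $||\bfk||>K$ gives $B_{N,K}\le\sup_{||\bfk||>K}\{\rho(\bfk)\}$. Feeding both estimates into $\sigma_N(\omega)=\max\{A_{N,K},B_{N,K}\}$ and using the monotonicity of the maximum yields the claimed bound.

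I do not expect a genuine obstacle here: the whole argument is a repackaging of the decomposition~(\ref{eqn:sigmaN}) already established for the Theorem, so the work reduces to bookkeeping. The only points demanding care are cosmetic. First, the second slot of the maximum should be read as $\sup_{||\bfk||>K}\{\rho(\bfk)\}$ (the displayed $\rho(K)$ does not typecheck, since $\rho$ acts on $\Lambda\subseteq\Z^s$), and equality with $\rho(K)$ would require extra monotonicity of $\rho$ that Definition~\ref{def:weightfunction} does not impose. Second, one should keep the normalization factor $||(\rho(\bfk))_{\bfk\in\Lambda^*}||_\infty^{-1}$ of Definition~\ref{definition:generalspectraltest} tracked exactly as in the proof of the Theorem, where it is absorbed into~(\ref{eqn:sigmaN}).
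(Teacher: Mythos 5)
Your proposal is correct and follows essentially the same route as the paper, whose entire proof is the remark that the corollary ``follows directly from (\ref{eqn:sigmaN})'', i.e.\ from the decomposition $\sigma_N(\omega)=\max\{A_{N,K},B_{N,K}\}$ together with the bound $B_{N,K}\le\sup_{||\bfk||>K}\{\rho(\bfk)\}$ that was already derived in the proof of Theorem~\ref{thm:spectraltest}. Your additional observations --- that the finiteness of $\{\bfk:0<||\bfk||\le K\}$ makes $A_{N,K}$ a maximum, and that $\rho(K)$ in the statement should be read as $\rho(\bfk)$ --- are accurate and consistent with the paper's intent.
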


\begin{proof}
This follows directly from (\ref{eqn:sigmaN}).
\end{proof}

\begin{theorem}
\label{thm:diaphony}
Let $\myF$, $\rho$ and $F_N^{(\alpha)}(\omega)$ be as in Definition \ref{definition:generalspectraltest}
and suppose that
\[
\sum_{\bfk\in \Lambda^*} \rho(\bfk)^\alpha< \infty.
\]
Then sequence $\omega$ is uniformly distributed in $[0,1)^s$ if and only if
\[
\lim_{N\to \infty} F_N^{(\alpha)}(\omega)= 0.
\]
\end{theorem}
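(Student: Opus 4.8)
The plan is to reduce both implications to the single fact that $\lim_{N\to\infty}F_N^{(\alpha)}(\omega)=0$ is equivalent to $\lim_{N\to\infty}S_N(\xi_\bfk,\omega)=0$ for every $\bfk\in\Lambda^*$, and then to invoke the u.d.d. property of $\myF$ exactly as in the proof of Theorem \ref{thm:spectraltest}. Throughout I abbreviate the normalizing constant, finite by hypothesis, by $C=\sum_{\bfk\in\Lambda^*}\rho(\bfk)^\alpha$, so that
\[
C\,F_N^{(\alpha)}(\omega)^\alpha = \sum_{\bfk\in\Lambda^*}\rho(\bfk)^\alpha\,\bigl|S_N(\xi_\bfk,\omega)\bigr|^\alpha,
\]
and I use repeatedly that $\bigl|S_N(\xi_\bfk,\omega)\bigr|\le\|\xi_\bfk\|_\infty\le 1$.

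For the direction ``$\lim_{N\to\infty}F_N^{(\alpha)}(\omega)=0$ implies uniform distribution'', I would fix an arbitrary $\bfk_0\in\Lambda^*$ and bound a single term by the whole sum: since every summand is nonnegative,
\[
\rho(\bfk_0)^\alpha\,\bigl|S_N(\xi_{\bfk_0},\omega)\bigr|^\alpha \le C\,F_N^{(\alpha)}(\omega)^\alpha .
\]
As $\rho(\bfk_0)>0$ is fixed, letting $N\to\infty$ forces $S_N(\xi_{\bfk_0},\omega)\to 0$. Since $\bfk_0\in\Lambda^*$ was arbitrary, the defining property of a u.d.d. function system then yields that $\omega$ is uniformly distributed in $[0,1)^s$.

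For the converse, I would first note that each $\xi_\bfk$ is Riemann integrable with $\int_{[0,1)^s}\xi_\bfk\,d\lambda_s=0$ for $\bfk\in\Lambda^*$, so uniform distribution of $\omega$ gives the pointwise convergence $S_N(\xi_\bfk,\omega)\to 0$ for each individual $\bfk\in\Lambda^*$. The task is then to upgrade this term-by-term convergence to convergence of the weighted sum. I would split the index set at an arbitrary truncation level $K$ into the finite head $\{\bfk:0<\|\bfk\|\le K\}$ and the tail $\{\bfk:\|\bfk\|>K\}$, in the spirit of the \etk\ bound above. The tail is controlled uniformly in $N$ by
\[
\sum_{\|\bfk\|>K}\rho(\bfk)^\alpha\,\bigl|S_N(\xi_\bfk,\omega)\bigr|^\alpha \le \sum_{\|\bfk\|>K}\rho(\bfk)^\alpha,
\]
which is the tail of a convergent series and hence smaller than any prescribed $\epsilon$ once $K$ is large; the head is a finite sum of terms each tending to $0$, so it drops below $\epsilon$ for all $N\ge N_0(\epsilon,K)$. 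Combining the two estimates and dividing by $C$ gives $F_N^{(\alpha)}(\omega)^\alpha\to 0$, hence $F_N^{(\alpha)}(\omega)\to 0$.

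The only genuine obstacle is this interchange of limit and infinite summation in the converse; everything else is bookkeeping. The summability hypothesis $\sum_{\bfk\in\Lambda^*}\rho(\bfk)^\alpha<\infty$ is precisely what makes the tail negligible uniformly in $N$, equivalently, it supplies the integrable majorant $\rho(\bfk)^\alpha$ for a dominated-convergence argument over the counting measure on $\Lambda^*$, and the normalization $\bigl|S_N(\xi_\bfk,\omega)\bigr|\le 1$ is what lets that majorant dominate every $N$. This mirrors the head/tail structure of the proof of Theorem \ref{thm:spectraltest}, with the $L^\infty$-supremum there replaced by the weighted $\ell^\alpha$-sum here.
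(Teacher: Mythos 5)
Your proof is correct and follows essentially the same route the paper indicates: the paper's proof of Theorem \ref{thm:diaphony} is only a one-line reference to the splitting technique of Theorem \ref{thm:spectraltest} (head/tail decomposition at a truncation level $K$, with the tail controlled by the summability of $\rho(\bfk)^\alpha$ and the head by the finitely many terms tending to zero), which is exactly what you carry out in detail, together with the straightforward single-term bound for the converse direction.
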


\begin{proof}
We adapt the proof of \cite[Theorem 2]{Hel10c}
and the splitting technique used in the proof of Theorem \ref{thm:spectraltest} to the case of diaphony.
\end{proof}

%
%

\section{Examples}\label{s:examples}
\subsection{Examples I: integration lattices}
\label{ss:spectraltest_examples}

Definition \ref{definition:generalspectraltest} generalizes various known notions of
the spectral test and of diaphony.

%

The spectral test has its origin in pseudorandom number generation.%
It measures the ``coarseness'' of  lattices
that can be associated with certain linear types of generators.
We refer to \cite[Ch. 3.3.4.]{Knu98a}) for a seminal discussion,
and to the surveys \cite{Lec97a,Nie92a} as well as to \cite{Hel98f}.

We recall some notions from the theory of lattices (for details see \cite{Nie92a,Slo94a}).
An $s$-dimensional \emph{lattice} is a discrete subset of $\R^s$ which is closed under addition and subtraction.
For any $s$-dimensional lattice $L$, there exists a lattice basis,
by which we understand $s$ independent vectors
$\bfg_1, \ldots, \bfg_s$  in $\R^s$
such that
\begin{equation*}
L= \left\{ \sum_{i=1}^s t_i \bfg_i: \ t_i\in \Z  \right\}.
\end{equation*}
Lattice bases are not unique.

A point $\bfg\in L\setminus \{\bfzero\}$ is called a \emph{primitive point} of $L$ if the line segment joining the origin $\bfzero$ and $\bfg$
does not contain any other point of $L$.

The \emph{dual lattice} $L^\bot$ of $L$ is defined as
\begin{equation*}
L^\bot = \left\{\bfz\in \R^s:\ \bfz \cdot \bfx \in \Z, \text{ for all } \bfx\in L  \right\},
\end{equation*}
where $\bfz \cdot \bfx$ denotes the usual inner product.

An $s$-dimensional \emph{integration lattice} is an $s$-dimensional lattice that contains $\Z^s$ as
a sublattice.
An $s$-dimensional $N$-point \emph{lattice rule} is an $s$-dimensional integration lattice $L$
of the form
\begin{equation}\label{eqn:integrationlattice}
L = \bigcup_{n=0}^{N-1} \left( \bfx_n + \Z^s  \right),
\end{equation}
where $\bfx_0, \ldots, \bfx_{N-1}$ are the $N$ distinct points of $L$ that belong to $[0,1)^s$.

The spectral test for lattices  is defined as follows.
\begin{definition}\label{definition:classicalspectraltest}
Let $L$ be an $s$-dimensional lattice in $\mathbb{R}^s$.
We will call a  family $\hyperfamily$ of parallel hyperplanes
$\hyper{c}$, $c \in C$,
in  $\mathbb{R}^s$ \emph{a cover of $L$} if
(i) $L \subseteq \bigcup_{c\in C} \hyper{c}$, and
(ii) $C$ is the smallest set (in the sense of set-inclusion)
with this property.

Let $\hyperfamily$ denote an arbitrary cover of $L$ in $\mathbb{R}^s$.
The \emph{spacing} $d(\hyperfamily)$ of $\hyperfamily$ will denote the minimal
distance between adjacent hyperplanes in this family,
$
d(\hyperfamily) = \inf \left\{  d(\hyper{c}, \hyper{d}) : c\neq d, \
        c,d  \in C \right\}.
$

We define \emph{the spectral test} of $L$ as the number
\[
\sigma (L) := \sup \left\{ d(\hyperfamily): \hyperfamily \text{ is a
        cover of } L \right\}.
\]
\end{definition}

The following theorem is well known in the theory of pseudorandom number generation
(see \cite{Fis96a,Hel98a,Knu98a,Lee95c}).
Computational aspects are discussed in \cite{Lec99a}.

\begin{theorem}
Let $L$ be an $s$-dimensional lattice.
Then
\begin{equation*}\label{eqn:classicalsigma}
\sigma(L) = 1/\min \{||\bfg||_2: \bfg\in L^\bot, \bfg \text{ primitive }  \},
\end{equation*}
where $||\bfg||_2 = (g_1+ \dots + g_s)^{1/2}$ is the Euclidean norm on $\R^s$.
\end{theorem}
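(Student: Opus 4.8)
The plan is to set up a bijection between the covers of $L$ that have positive spacing and the primitive points of the dual lattice $L^\bot$, and then to read off the spacing of each such cover as the reciprocal of the Euclidean length of the associated dual point. First I would observe that any family of parallel hyperplanes in $\R^s$ is determined by a common nonzero normal vector $\bfz$ together with a set of offsets, and that for $\hyperfamily$ to be a cover of $L$, condition (i) forces the offset set to contain $\{\bfz\cdot\bfx:\bfx\in L\}$, while the minimality condition (ii) forces it to equal this set. Thus a cover is completely described by a direction $\bfz\neq\bfzero$, and the relevant object is the value set $\{\bfz\cdot\bfx:\bfx\in L\}$, which, because $L$ is closed under addition and subtraction, is a subgroup of $(\R,+)$.

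Next I would invoke the dichotomy for subgroups of $\R$: such a subgroup is either dense or of the form $\beta\Z$ for a unique $\beta\ge 0$, and since $L$ is $s$-dimensional it spans $\R^s$, so $\bfz\cdot\bfx$ is not identically zero and $\beta>0$ whenever the set is discrete. In the dense case the hyperplanes accumulate, the spacing $d(\hyperfamily)$ equals $0$, and such covers are irrelevant to the supremum defining $\sigma(L)$, provided positive-spacing covers exist, which they do since $L^\bot$ is a genuine $s$-dimensional lattice. In the discrete case $\{\bfz\cdot\bfx:\bfx\in L\}=\beta\Z$, and after rescaling $\bfz\mapsto\bfz/\beta$ I may assume the value set is exactly $\Z$, which says precisely that $\bfz\in L^\bot$. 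The crux is the equivalence: for $\bfz\in L^\bot\setminus\{\bfzero\}$, the functional $\bfx\mapsto\bfz\cdot\bfx$ maps $L$ \emph{onto} all of $\Z$ if and only if $\bfz$ is a primitive point of $L^\bot$. I would prove this by writing $\bfz=m\bfz_0$ with $\bfz_0$ the primitive dual vector in the direction of $\bfz$ and $m\in\N$, noting that the image of $L$ under $\bfz_0\cdot\,$ is $d\Z$ for some $d\in\N$, and observing that $d\ge 2$ would force $\bfz_0/d\in L^\bot$, contradicting the primitivity of $\bfz_0$; hence $d=1$, the image under $\bfz$ is $m\Z$, and this equals $\Z$ exactly when $m=1$.

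Having identified the positive-spacing covers with the primitive points of $L^\bot$ (the sign ambiguity $\bfz$ versus $-\bfz$ being irrelevant to the geometry), I would compute the spacing using the elementary formula for the distance between the parallel hyperplanes $\bfz\cdot\bfx=k$ and $\bfz\cdot\bfx=k+1$, namely $1/\|\bfz\|_2$; since consecutive integers index adjacent hyperplanes, $d(\hyperfamily)=1/\|\bfz\|_2$. Taking the supremum over all covers then yields
\[
\sigma(L)=\sup\left\{\frac{1}{\|\bfz\|_2}:\bfz\in L^\bot \text{ primitive}\right\}=\frac{1}{\min\{\|\bfz\|_2:\bfz\in L^\bot \text{ primitive}\}},
\]
and the minimum is attained because $L^\bot$ is discrete and its shortest nonzero vector is automatically primitive.

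I expect the main obstacle to be the bookkeeping in the primitivity equivalence rather than any deep difficulty: one must carefully justify that surjectivity of the linear functional onto $\Z$ is equivalent to primitivity in $L^\bot$, and separately dispose of the dense, irrational-direction covers, which the hyperplane picture tempts one to overlook but which contribute spacing $0$ and hence do not affect the supremum.
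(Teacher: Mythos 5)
The paper does not actually prove this theorem: it is stated as a known result with references to Fishman, Knuth, Leeb and \cite{Hel98a}, so there is no in-paper argument to compare yours against. Judged on its own, your proof is correct and is essentially the standard argument from those sources: identifying a cover with a normal direction $\bfz$, noting that $\{\bfz\cdot\bfx:\bfx\in L\}$ is a (finitely generated) subgroup of $\R$ and hence dense or of the form $\beta\Z$, discarding the dense case as contributing spacing $0$, normalizing to $\bfz\in L^\bot$ with image exactly $\Z$, and matching surjectivity onto $\Z$ with primitivity in $L^\bot$. The two points that genuinely need care --- that a dense value set yields spacing $0$ under the paper's definition $d(\hyperfamily)=\inf\{d(\hyper{c},\hyper{d}):c\neq d\}$, and that $d\ge 2$ in $\bfz_0\cdot L=d\Z$ would put $\bfz_0/d$ on the open segment from $\bfzero$ to $\bfz_0$ inside $L^\bot$, contradicting primitivity --- are both handled correctly, as is the attainment of the minimum via discreteness of $L^\bot$. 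I see no gap.
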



In the field of quasi-Monte Carlo integration,
similar concepts have been developed within the context of
{\em good lattice points}.

\begin{definition}\label{def:GLP}
For a given integer $N\ge 2$ and for a given dimension $s\ge2$,
we call an integer point
$\mathbf{a} = (a_1, \ldots, a_s)\in$ $\mathbb{Z}^s$,
$\gcd(a_i, N)=1$, $1\le i\le s$,
a \emph{good lattice point} modulo $N$ if the finite sequence
\begin{equation}\label{eqn:GLPa}
\omega_\mathbf{a} = \left( \left\{ \frac{n}{N}\;
        \mathbf{a} \right\} \right)_{n=0}^ {N-1}
\end{equation}
is ``very uniformly'' distributed in $[0,1)^s$
in the sense that its discrepancy is low.
\end{definition}

The conditions $\gcd(a_i,N)=1$, $1\le i\le s$, ensure
that all points in $\omega_\bfa$ and in its projections to lower dimensions are distinct.

We may view $\omega_\bfa$ as the node set of
an $s$-dimensional $N$-point lattice rule.
Let $L(\omega_\bfa)$ denote the integration lattice associated with $\omega_\bfa$
defined by (\ref{eqn:integrationlattice}).
The dual lattice is given by
$L(\omega_\bfa)^\bot = \left\{ \bfk\in\Z^s: \bfk\cdot \bfa \equiv 0 \pmod{N}  \right\}$.

With an integer point $\bfa\in \Z^s$ subject to the conditions of Definition \ref{def:GLP},
we may associate several figures of merit.
In view of the spectral test for lattices,
define $\sigma(\bfa,N)=\sigma (L(\omega_\bfa))$.
Then
\begin{equation*}\label{eqn:classicalspectraltest2}
\sigma(\bfa,N)= 1/
        \min \left\{ ||\mathbf{k}||_2: \mathbf{k} \in
        \mathbb{Z}^s\setminus \{ \mathbf{0} \},\
        \mathbf{k}\cdot \mathbf{a} \equiv 0 \pmod{N}  \right\}.
\end{equation*}

For a real number $\alpha> 1$,
define
\begin{equation*}\label{eqn:Palpha}
P_\alpha(\mathbf{a},N) = \sum_{\scriptstyle
\mathbf{k} \neq \mathbf{0}
\atop \scriptstyle \mathbf{k}\cdot \mathbf{a} \equiv 0 \pmod{N}}
\frac{1}{r(\mathbf{k})^\alpha},
\end{equation*}
where summation is over integer vectors $\bfk$,
and
\[
r(\mathbf{k})= \prod_{i=1}^s \max\{1, |k_i| \},\quad
\mathbf{k} = (k_1, \ldots, k_s) \in \mathbb{Z}^s.
\]

Another important figure of merit is the
\emph{Babenko-Zaremba index}
\begin{equation*}\label{eqn:BZindex}
\varkappa (\mathbf{a},N) = 1 / \min \left\{ r(\mathbf{k}): \mathbf{k} \in
        \mathbb{Z}^s\setminus \{ \mathbf{0} \},\
         \mathbf{k}\cdot \mathbf{a} \equiv 0 \pmod{N}  \right\},
\end{equation*}
see the monographs  \cite{Nie92a,Slo94a}.

The three quantities $\sigma(\bfa,N)$, $P_\alpha(\bfa,N)$, and $\varkappa(\bfa,N)$
are special cases of the spectral test introduced in Definition \ref{definition:generalspectraltest}
applied to the sequence $\omega_\bfa$.
In order to establish this connection,
we employ the following result of Sloan and Kachoyan~\cite{Sloan87a}
(for the proof see \cite[Lemma 5.21]{Nie92a} and \cite[Lemma 2.7]{Slo94a}):

\begin{lemma}
Let $\omega = (\bfx_n)_{n=0}^{N-1}$ be the sequence of nodes of an $s$-dimensional
$N$-point lattice rule $L$.
Then
\[
S_N(e_\bfk, \omega) =
  \begin{cases}
   1, & \text{if } \,\bfk\in L^\bot, \\
   0,  & \text{if } \,\bfk\not\in L^\bot.
  \end{cases}
\]
\end{lemma}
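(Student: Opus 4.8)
The plan is to exploit the fact that the node set of a lattice rule forms a complete set of coset representatives for a finite abelian group, and that each trigonometric function $e_\bfk$ descends to a character of that group; the conclusion then follows at once from the orthogonality relations for characters. Recall that $e_\bfk(\bfx) = \prod_{i=1}^s e(k_i x_i) = e(\bfk\cdot\bfx)$, so that $S_N(e_\bfk,\omega) = \frac{1}{N}\sum_{n=0}^{N-1} e(\bfk\cdot\bfx_n)$ is a normalized exponential sum over the nodes.

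First I would observe that, since the integration lattice $L$ contains $\Z^s$ as a sublattice, the quotient $G = L/\Z^s$ is a finite abelian group; and because the defining property (\ref{eqn:integrationlattice}) specifies $\bfx_0,\ldots,\bfx_{N-1}$ as exactly the $N$ distinct points of $L$ lying in $[0,1)^s$, these points constitute a full system of coset representatives for $G$, so that $|G| = N$. I would also note that $L\supseteq\Z^s$ forces $L^\bot \subseteq (\Z^s)^\bot = \Z^s$, so $L^\bot$ consists of integer vectors and the statement is consistent with $\bfk$ ranging over $\Z^s$. Next I would check that, for any $\bfk\in\Z^s$, the assignment $\chi_\bfk(\bfx + \Z^s) = e(\bfk\cdot\bfx)$ is a well-defined character of $G$: if $\bfx' = \bfx + \mathbf{m}$ with $\mathbf{m}\in\Z^s$, then $e(\bfk\cdot\bfx') = e(\bfk\cdot\bfx)\,e(\bfk\cdot\mathbf{m}) = e(\bfk\cdot\bfx)$, since $\bfk\cdot\mathbf{m}\in\Z$. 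With this in hand, the node-sum becomes an average of $\chi_\bfk$ over the whole group,
\[
S_N(e_\bfk,\omega) = \frac{1}{N}\sum_{n=0}^{N-1} e(\bfk\cdot\bfx_n) = \frac{1}{N}\sum_{g\in G}\chi_\bfk(g).
\]

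The final step is to decide in each case whether $\chi_\bfk$ is the trivial character. Now $\chi_\bfk$ is trivial precisely when $e(\bfk\cdot\bfx)=1$, i.e. $\bfk\cdot\bfx\in\Z$, for every $\bfx\in L$, and this is exactly the condition $\bfk\in L^\bot$. Hence, if $\bfk\in L^\bot$ then $\chi_\bfk\equiv 1$ and the average equals $1$; if $\bfk\notin L^\bot$ then $\chi_\bfk$ is a nontrivial character of the finite abelian group $G$, and orthogonality gives $\sum_{g\in G}\chi_\bfk(g)=0$, so $S_N(e_\bfk,\omega)=0$. This yields precisely the claimed dichotomy.

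The computation is routine once the quotient-group viewpoint is installed; the one point I would treat with care — and the step I regard as the main obstacle — is justifying that the nodes form a complete, non-redundant transversal of $L/\Z^s$, so that the sum over $\bfx_0,\ldots,\bfx_{N-1}$ genuinely coincides with the full character sum over $G$. This rests on the defining property that the $\bfx_n$ are the distinct points of $L$ in $[0,1)^s$ together with the fact that each $\Z^s$-coset inside $L$ meets $[0,1)^s$ in exactly one point, which is what makes the passage to $G$ and the invocation of orthogonality legitimate.
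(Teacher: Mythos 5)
Your proof is correct. The paper does not prove this lemma itself but cites Niederreiter and Sloan--Joe, and your argument --- realizing $e_\bfk$ as a character of the finite abelian group $L/\Z^s$, for which the nodes form a complete transversal, and invoking orthogonality of characters --- is essentially the same argument given in those references (there phrased as the standard translation trick $e(\bfk\cdot\bfx_m)S=S$ rather than as an explicit appeal to character orthogonality).
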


\begin{example}
Let $\mathcal{F} = \mathcal{T}^{(s)}$
(hence $\Lambda=\Z^s$), and put $\omega_\bfa = (\{(n/N)\bfa\})_{n=0}^{N-1}$,
where $\bfa\in \Z^s$ is subject to the conditions in Definition \ref{def:GLP}.
Then,
\begin{enumerate}
\item for the choice $\rho(\mathbf{k}) = ||\mathbf{k}||_2^{-1}$ for $\bfk\neq \bfzero$,
    the hybrid spectral test $\sigma_N(\omega_\bfa)$ introduced in Definition \ref{definition:generalspectraltest},
    with respect to $\myF$ and $\rho$, is equal to the classical spectral test
    of Definition
    \ref{definition:classicalspectraltest} applied to the integration lattice
    $L(\omega_\bfa)$.

\item for $\rho(\mathbf{k}) = r(\bfk)^{-1}$ for $\bfk\neq \bfzero$,
we obtain $\sigma_N(\omega_\bfa) = \varkappa(\bfa,N)$,
the Babenko-Zaremba index defined in (\ref{eqn:BZindex}).

\item for $\rho(\mathbf{k}) = r(\bfk)^{-1}$ for $\bfk\neq \bfzero$, and $\alpha>1$,
we get
\[
\left( \sum_{\bfk\neq \bfzero} r(\bfk)^{-\alpha} \right)^{1/\alpha} F^{(\alpha)}_N (\omega_\bfa)
    = \left( P_\alpha(\bfa,N) \right)^{1/\alpha},
\]
for the $L^\alpha$-diaphony of the sequence $\omega_\bfa$.
\end{enumerate}
\end{example}

\begin{example}
By obvious  choices of $\myF$ and $\rho$ and with $\alpha=2$,
we obtain the classical diaphony of  Zinterhof \cite{Zin76a},
the dyadic (Walsh) diaphony of Hellekalek and Leeb \cite{Hel96b},
the $b$-adic (Walsh) diaphony versions of Grozdanov et al. \cite{MR2246893,MR1980679,MR1829550},
the $p$-adic diaphony of Hellekalek \cite{Hel10a},
and the more general notion of {\em hybrid} diaphony that was introduced in \cite{Hel10c}.
\end{example}

\begin{example}
By obvious choices for $\myF$ and $\rho$,
we obtain the Walsh spectral test of Hellekalek~\cite{Hel02a}.
\end{example}

\subsection{Examples II: extreme and star discrepancy}\label{s:examples2}

The hybrid spectral test introduced in Definition \ref{definition:generalspectraltest} does not include the extreme
discrepancy and the star discrepancy, but we may approximate these two measures of
uniform distribution arbitrarily close by suitable versions of the hybrid spectral test.

We recall the definition of discrepancy.
Let $\mathcal{J}$ denote the class of all subintervals of $[0,1)^s$
of the form $\prod_{i=1}^s [u_i,v_i)$, $0\le u_i<v_i\le 1$, $1\le i\le s$,
and let $\mathcal J^*$ denote the subclass of $\mathcal{J}$ of intervals
of the type $\prod_{i=1}^s [0, v_i)$ anchored at the origin.
The extreme discrepancy and the star discrepancy of a sequence
are defined as follows (see \cite{Kui74a,Nie92a}).

\begin{definition}
Let $\omega = (\mathbf{x}_n)_{n\ge 0}$ be a sequence in $[0,1)^s$.
The {\em (extreme) discrepancy}
$D_N(\omega)$ of the first $N$ elements of $\omega$ is defined as
\[
D_N(\omega) \;=\; \sup_{J\in \mathcal{J}} \left| S_N(\myone_J- \lambda_s(J), \omega)
    \right|.
\]
The {\em star discrepancy}
$D_N^*(\omega)$ of the first $N$ elements of $\omega$ is defined as
\[
D_N^*(\omega) \;=\; \sup_{J\in \mathcal{J}^*} \left| S_N(\myone_J - \lambda_s(J), \omega)
    \right|.
\]
\end{definition}

We first approximate $D_N(\omega)$ and $D_N^*(\omega)$ by discrete discrepancies.


\begin{definition}
Let $\mathbf{b}=(b_1, \ldots, b_s)$,
with not necessarily distinct integers $b_i\ge 2$.
A {\em $\mathbf{b}$-adic  interval} in the resolution class defined by $\mathbf{g}=(g_1, \ldots, g_s)\in \N_0^s$
(or with resolution $\mathbf{g}$)
is a sub\-interval  of $[0,1)^s$ of the form
\[
\prod_{i=1}^s \left[
    a_i b_i^{-g_i}, d_i b_i^{-g_i}
	\right),
    \ 0\le a_i < d_i \le b_i^{g_i}, \;  a_i, d_i \in \mathbb{N}_0, \;1\le i\le s\; .
\]
We denote the class of all $\bfb$-adic intervals with resolution $\bfg$ by $\mathcal{J}_{\bfb,\bfg}$.
The subclass of those $\bfb$-adic intervals anchored at the origin
will be denoted by $\mathcal{J}_{\bfb,\bfg}^*$.
Further, let
\[
\mathcal{J}_\bfb = \bigcup_{\bfg\in \N_0^s} \mathcal{J}_{\bfb,\bfg}
\]
denote the class of all $\bfb$-adic intervals in $[0,1)^s$ and put
\[
\mathcal{J}_\bfb^* = \bigcup_{\bfg\in \N_0^s} \mathcal{J}_{\bfb,\bfg}^*.
\]
\end{definition}

For a given resolution
$\mathbf{g}\in \mathbb{N}_0^s$,
we define the  domains
\begin{align*}
\Delta_{\mathbf{b}}(\mathbf{g}) &= \left\{\mathbf{k}=(k_1, \ldots, k_s)\in \mathbb{N}_0^s :\
0\le k_i < b_i^{g_i}, 1\le i\le s  \right\},\\
\Delta_{\mathbf{b}}^*(\mathbf{g}) &= \Delta_{\mathbf{b}}(\mathbf{g})
    \setminus \{\mathbf{0} \}\;.\nonumber
\end{align*}
and
\[
\nabla_{\mathbf{b}}(\mathbf{g}) = \left\{\mathbf{k}=(k_1, \ldots, k_s)\in \mathbb{N}_0^s :\
1\le k_i \le b_i^{g_i}, 1\le i\le s  \right\}.
\]
We note that $\Delta_\bfb(\bf0)=\{ \bfzero\}$ and $\nabla_\bfb(\bf0)=\emptyset$.
Further, we observe that we may write the intervals in $\mathcal{J}_{\bfb,\bfg}$ in the form
\[
\mathcal{J}_{\bfb,\bfg} =
\left\{ I_{\bfa,\bfd;\bfg}: (\bfa,\bfd)\in \Delta_\bfb(\bfg)\times \nabla_\bfb(\bfg) \right\},
\]
where $ I_{\bfa,\bfd;\bfg}= \prod_{i=1}^s [\varphi_{b_i}(a_i), \varphi_{b_i}(d_i))$,
and $\bfa=(a_1, \ldots, a_s)$,
and $ \bfd=(d_1, \ldots, d_s)$.
The intervals in $\mathcal{J}_{\bfb,\bfg}^*$ are  of the form $I_{\bfzero,\bfd;\bfg}$,
with $\bfd\in \nabla_{\mathbf{b}}(\mathbf{g})$.

\begin{definition}
Let $\omega = (\mathbf{x}_n)_{n\ge 0}$ be a sequence in $[0,1)^s$,
let $\mathbf{b}=(b_1, \ldots, b_s)$,
with not necessarily distinct integers $b_i\ge 2$, and let $\bfg\in \N_0^s$ be a given resolution vector.
The {\em discrete (extreme) discrepancy} in base $\bfb$, for resolution $\bfg$,
 of the first $N$ elements of $\omega$ is defined as
\[
D_{N;\bfb,\bfg}(\omega) = \max_{I\in \mathcal{J}_{\bfb,\bfg}} \left| S_N(\myone_I- \lambda_s(I), \omega) \right|.
\]
The {\em discrete star discrepancy} in base $\bfb$, for resolution $\bfg$,
of the first $N$ elements of $\omega$ is defined as
\[
D_{N;\bfb,\bfg}^*(\omega) = \max_{I\in \mathcal{J}_{\bfb,\bfg}^*} \left| S_N(\myone_I- \lambda_s(I), \omega) \right|.
\]
\end{definition}

\begin{theorem}\label{theorem:discrepancy1}
Let $\bfb=(b_1, \ldots, b_s)$ be a vector of $s$ not necessarily distinct integers $b_i\ge 2$.
Then, for all $\bfg=(g_1, \ldots, g_s)\in\N^s$,
\begin{align*}
D_{N;\bfb,\bfg}(\omega)     &\le D_N(\omega) \le \epsilon_\bfb(\bfg) + D_{N;\bfb,\bfg}(\omega),\\
D_{N;\bfb,\bfg}^*(\omega) &\le D_N^*(\omega) \le \epsilon^*_\bfb(\bfg) + D_{N;\bfb,\bfg}^*,
\end{align*}
where the error terms $\epsilon_\bfb(\bfg)$ and $\epsilon^*_\bfb(\bfg)$ are given by
\[
\epsilon_\bfb(\bfg) = 1 - \prod_{i=1}^s (1- 2 b_i^{-g_i}),
\quad
\epsilon^*_\bfb(\bfg) = 1 - \prod_{i=1}^s (1-  b_i^{-g_i}).
\]
\end{theorem}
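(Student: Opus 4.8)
The plan is to prove the two left inequalities by a trivial inclusion of interval classes and the two right inequalities by sandwiching an arbitrary interval between an inner and an outer $\bfb$-adic interval of the prescribed resolution, and then controlling the resulting gap in Lebesgue measure. For the left inequalities, observe that every interval in $\mathcal{J}_{\bfb,\bfg}$ is a member of $\mathcal{J}$ and every interval in $\mathcal{J}_{\bfb,\bfg}^*$ is a member of $\mathcal{J}^*$, so the maxima defining the discrete discrepancies run over subfamilies of the families defining $D_N(\omega)$ and $D_N^*(\omega)$. Hence $D_{N;\bfb,\bfg}(\omega)\le D_N(\omega)$ and $D_{N;\bfb,\bfg}^*(\omega)\le D_N^*(\omega)$ follow immediately.

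For the right inequalities, I would fix an arbitrary $J=\prod_{i=1}^s[u_i,v_i)\in\mathcal{J}$ and round the endpoints to the grid $\{a\,b_i^{-g_i}:0\le a\le b_i^{g_i}\}$ in each coordinate, setting $\underline u_i=\lfloor u_i b_i^{g_i}\rfloor b_i^{-g_i}$, $\overline v_i=\lceil v_i b_i^{g_i}\rceil b_i^{-g_i}$, and analogously $\overline u_i,\underline v_i$. This produces an outer interval $I^+=\prod_i[\underline u_i,\overline v_i)$ and an inner interval $I^-=\prod_i[\overline u_i,\underline v_i)$, both in $\mathcal{J}_{\bfb,\bfg}$ (the inner one possibly degenerate, with the convention that an empty interval has vanishing measure and node count, for which the estimate below holds trivially), with $I^-\subseteq J\subseteq I^+$. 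Since the counting map $A\mapsto\#\{0\le n<N:\bfx_n\in A\}$ is monotone under inclusion, $S_N(\myone_{I^-},\omega)\le S_N(\myone_J,\omega)\le S_N(\myone_{I^+},\omega)$. Adding and subtracting $\lambda_s(I^\pm)$ and using $|S_N(\myone_{I^\pm}-\lambda_s(I^\pm),\omega)|\le D_{N;\bfb,\bfg}(\omega)$ would then yield
\[
\bigl|S_N(\myone_J-\lambda_s(J),\omega)\bigr|\le D_{N;\bfb,\bfg}(\omega)+\max\bigl\{\lambda_s(I^+)-\lambda_s(J),\ \lambda_s(J)-\lambda_s(I^-)\bigr\}.
\]

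The hard part will be to bound the measure gap on the right by $\epsilon_\bfb(\bfg)$. Writing $\ell_i=v_i-u_i$ and $\delta_i=2b_i^{-g_i}$, each rounding displaces an endpoint by less than $b_i^{-g_i}$, so the $i$th side length of $I^+$ is at most $\min\{1,\ell_i+\delta_i\}$ and that of $I^-$ is at least $\max\{\ell_i-\delta_i,0\}$. It therefore suffices to establish, for all $\ell_i\in[0,1]$, the two analytic inequalities
\[
\prod_{i=1}^s\min\{1,\ell_i+\delta_i\}-\prod_{i=1}^s\ell_i\le 1-\prod_{i=1}^s(1-\delta_i),\qquad \prod_{i=1}^s\ell_i-\prod_{i=1}^s\max\{\ell_i-\delta_i,0\}\le 1-\prod_{i=1}^s(1-\delta_i).
\]
Each left-hand side, as a function on $[0,1]^s$, attains its maximum at the corner $\ell_i=1-\delta_i$ (first inequality) respectively $\ell_i=1$ (second inequality): away from the caps the partial derivative in $\ell_i$ is a difference of products of the remaining factors in which the subtracted product is dominated termwise, hence non-negative, and at the caps the sign reverses, which pins the maximum to the stated corner. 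At that corner both sides coincide with $1-\prod_i(1-\delta_i)=\epsilon_\bfb(\bfg)$. Combining this with the displayed sandwich bound and taking the supremum over $J\in\mathcal{J}$ gives $D_N(\omega)\le\epsilon_\bfb(\bfg)+D_{N;\bfb,\bfg}(\omega)$.

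Finally, the star case is handled by the identical argument applied to $J=\prod_i[0,v_i)\in\mathcal{J}^*$: here the left endpoints $u_i=0$ already lie on the grid, so only the $v_i$ require rounding and the per-coordinate deviation drops from $2b_i^{-g_i}$ to $b_i^{-g_i}$. The same monotonicity estimate, now with $\delta_i=b_i^{-g_i}$, produces the error term $\epsilon^*_\bfb(\bfg)=1-\prod_i(1-b_i^{-g_i})$ and thus $D_N^*(\omega)\le\epsilon^*_\bfb(\bfg)+D_{N;\bfb,\bfg}^*(\omega)$, completing the proof.
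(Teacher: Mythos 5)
Your proof is correct and rests on the same inner/outer $\bfb$-adic approximation that underlies the paper's argument; the paper simply outsources this step to the proof of Theorem 3.12 in \cite{Hel13a} and quotes its Inequality (6), whereas you carry it out in full, including the verification that the measure gap is bounded by $\epsilon_\bfb(\bfg)$ (resp.\ $\epsilon^*_\bfb(\bfg)$). The only slightly informal spot is the corner-maximization argument for the two product inequalities, which can be replaced by the cleaner telescoping bound $\prod_i x_i - \prod_i y_i \le 1 - \prod_i\bigl(1-(x_i-y_i)\bigr)$ valid for $0\le y_i\le x_i\le 1$, but your reasoning as stated is sound.
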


\begin{proof}
The inequalities $D_{N;\bfb,\bfg}(\omega) \le D_N(\omega)$ and $D_{N;\bfb,\bfg}^*(\omega) \le D_N^*(\omega)$
are trivial.

In order to show $D_N(\omega) \le \epsilon_\bfb(\bfg) + D_{N;\bfb,\bfg}(\omega)$,
we proceed as in the proof of Theorem 3.12 in \cite{Hel13a}.
For an arbitrary subinterval $J$ of $[0,1)^s$ we obtain the following
bound (see \cite[Inequality (6)]{Hel13a}):
\begin{equation}\label{eqn:upperbound01}
\left| S_N(\myone_J - \lambda_s(J), \omega) \right| \le
 \epsilon_\bfb(\bfg) +
    \max_{I\in \mathcal{J}_{\bfb,\bfg}}\{ \left| S_N(\myone_I - \lambda_s(I), \omega) \right|  \}.
\end{equation}
This bound is independent of the choice of $J$.
As a consequence,
\[
D_N(\omega) = \sup_{J\in \mathcal{J}} \left| S_N(\myone_J- \lambda_s(J), \omega) \right|
\le \epsilon_\bfb(\bfg) + D_{N;\bfb,\bfg}(\omega).
\]

In the case of the star discrepancy $D_N^*(\omega)$,
the intervals $J$ are anchored at the origin.
It is easy to see from the proof of Theorem 3.12 in \cite{Hel13a} that this fact allows us
to replace the error term $\epsilon_\bfb(\bfg)$ by $\epsilon_\bfb^*(\bfg)$.
This finishes the proof.
\end{proof}

\begin{remark}\label{rem:bound1}
An elementary analytic argument shows that
$\epsilon_\bfb(\bfg)\leq 2s\delta_\bfg$,
and $\epsilon^*_\bfb(\bfg)\leq s\delta_\bfg$,
where $\delta_\bfg=\max_{1\le i\le s} b_i^{-g_i}$.
\end{remark}

\begin{corollary}
We have the following discretization:
\begin{align*}
D_N(\omega) &= \sup_{I\in \mathcal{J}_\bfb} \left| S_N(\myone_I - \lambda_s(I), \omega) \right|,\\
D_N^*(\omega) &= \sup_{I\in \mathcal{J}_\bfb^*} \left| S_N(\myone_I - \lambda_s(I), \omega) \right|.
\end{align*}
\end{corollary}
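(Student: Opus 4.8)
The plan is to read the corollary straight off Theorem~\ref{theorem:discrepancy1}, using the single structural observation that $\mathcal{J}_\bfb=\bigcup_{\bfg\in\N_0^s}\mathcal{J}_{\bfb,\bfg}$, so that $\sup_{I\in\mathcal{J}_\bfb}|S_N(\myone_I-\lambda_s(I),\omega)|=\sup_{\bfg\in\N_0^s}D_{N;\bfb,\bfg}(\omega)$. I would then establish the two inequalities between $D_N(\omega)$ and this supremum separately.

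First, for the direction $\sup_{I\in\mathcal{J}_\bfb}|S_N(\myone_I-\lambda_s(I),\omega)|\le D_N(\omega)$, I observe that every $\bfb$-adic interval is a subinterval of $[0,1)^s$ of the product form defining $\mathcal{J}$, hence $\mathcal{J}_\bfb\subseteq\mathcal{J}$. Taking the supremum of $|S_N(\myone_I-\lambda_s(I),\omega)|$ over the smaller class therefore yields the inequality at once, with no appeal to Theorem~\ref{theorem:discrepancy1} needed.

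For the reverse inequality I would invoke the upper bound of Theorem~\ref{theorem:discrepancy1}: for every $\bfg\in\N^s$ one has $D_N(\omega)\le\epsilon_\bfb(\bfg)+D_{N;\bfb,\bfg}(\omega)$. Since $\mathcal{J}_{\bfb,\bfg}\subseteq\mathcal{J}_\bfb$, the term $D_{N;\bfb,\bfg}(\omega)$ is bounded above, uniformly in $\bfg$, by $S:=\sup_{I\in\mathcal{J}_\bfb}|S_N(\myone_I-\lambda_s(I),\omega)|$; thus $D_N(\omega)\le\epsilon_\bfb(\bfg)+S$ for all $\bfg\in\N^s$. Letting each $g_i\to\infty$ and using the bound $\epsilon_\bfb(\bfg)\le 2s\delta_\bfg$ from Remark~\ref{rem:bound1}, with $\delta_\bfg=\max_{1\le i\le s}b_i^{-g_i}\to 0$, forces $\epsilon_\bfb(\bfg)\to 0$ and hence $D_N(\omega)\le S$. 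Combining the two inequalities gives the first identity.

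The star-discrepancy identity follows by the identical argument, replacing $\mathcal{J}$, $\mathcal{J}_\bfb$, $\mathcal{J}_{\bfb,\bfg}$, and $\epsilon_\bfb(\bfg)$ by their anchored counterparts $\mathcal{J}^*$, $\mathcal{J}_\bfb^*$, $\mathcal{J}_{\bfb,\bfg}^*$, and $\epsilon_\bfb^*(\bfg)$, and using $\epsilon_\bfb^*(\bfg)\le s\delta_\bfg$. There is essentially no hard step: all analytic content resides in Theorem~\ref{theorem:discrepancy1}, and the only point requiring care is the passage to the limit $\bfg\to\infty$, i.e.\ checking that the supremum over the nested family of resolutions absorbs the vanishing error terms. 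I would merely remark that the supremum over $\mathcal{J}_\bfb$ ranges over $\bfg\in\N_0^s$ whereas Theorem~\ref{theorem:discrepancy1} is stated for $\bfg\in\N^s$; this is harmless, since the limiting argument for the upper bound is carried out along $\bfg\in\N^s$, while the lower bound uses the full class $\mathcal{J}_\bfb\subseteq\mathcal{J}$ directly.
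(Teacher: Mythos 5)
Your proposal is correct and follows exactly the route the paper intends: the lower bound comes from the inclusion $\mathcal{J}_\bfb\subseteq\mathcal{J}$, and the upper bound from Theorem~\ref{theorem:discrepancy1} together with Remark~\ref{rem:bound1} by letting the resolution $\bfg$ tend to infinity so that $\epsilon_\bfb(\bfg)\to 0$. Your side remark about $\N_0^s$ versus $\N^s$ is handled correctly and the star-discrepancy case is the same argument with the anchored classes.
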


We recall that a sequence $\omega$ is called uniformly distributed in $[0,1)^s$ if and only if
\begin{equation}\label{definition:udmod1}
\forall J\in \mathcal{J}:\ \lim_{N\to \infty} S_N(\myone_J - \lambda_s(J), \omega) = 0.
\end{equation}

\begin{corollary}\label{cor:udmod1}
It follows from Inequality (\ref{eqn:upperbound01}) that a sequence $\omega$ is uniformly distributed in $[0,1)^s$
if and only if
\[
\forall I\in \mathcal{J_\bfb}:\ \lim_{N\to \infty} S_N(\myone_I - \lambda_s(I), \omega) = 0.
\]
\end{corollary}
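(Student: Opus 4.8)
Corollary \ref{cor:udmod1} asserts the equivalence of uniform distribution of $\omega$ with the vanishing of $S_N(\myone_I - \lambda_s(I), \omega)$ for every $\bfb$-adic interval $I\in \mathcal{J}_\bfb$. The plan is to derive this directly from the two-sided estimate in Theorem \ref{theorem:discrepancy1}, exactly as the pointer to Inequality (\ref{eqn:upperbound01}) suggests, by comparing the ordinary discrepancy $D_N(\omega)$ with the discrete discrepancies $D_{N;\bfb,\bfg}(\omega)$ across all resolutions $\bfg$.

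First I would treat the forward implication. Suppose $\omega$ is uniformly distributed, which by the classical characterization (\ref{definition:udmod1}) means $\lim_{N\to\infty} S_N(\myone_J - \lambda_s(J),\omega)=0$ for all $J\in \mathcal{J}$. Every $\bfb$-adic interval $I\in \mathcal{J}_\bfb$ is in particular a subinterval of $[0,1)^s$, so it belongs to $\mathcal{J}$; hence the limit vanishes for each such $I$ as a special case. This direction is immediate and requires no work beyond the inclusion $\mathcal{J}_\bfb \subseteq \mathcal{J}$.

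The substantive direction is the converse. Assume $\lim_{N\to\infty} S_N(\myone_I - \lambda_s(I),\omega)=0$ for every $I\in\mathcal{J}_\bfb$, and fix an arbitrary $J\in\mathcal{J}$ together with $\epsilon>0$. By Remark \ref{rem:bound1} the error term satisfies $\epsilon_\bfb(\bfg)\le 2s\,\delta_\bfg$ with $\delta_\bfg=\max_{1\le i\le s} b_i^{-g_i}$, so by choosing every coordinate $g_i$ large enough I can force $\epsilon_\bfb(\bfg)<\epsilon/2$. With this fixed resolution $\bfg$, Inequality (\ref{eqn:upperbound01}) gives
\[
\left| S_N(\myone_J - \lambda_s(J), \omega) \right| \le
 \epsilon_\bfb(\bfg) +
    \max_{I\in \mathcal{J}_{\bfb,\bfg}}\{ \left| S_N(\myone_I - \lambda_s(I), \omega) \right|  \}.
\]
The class $\mathcal{J}_{\bfb,\bfg}$ is \emph{finite}, since $(\bfa,\bfd)$ ranges over the finite set $\Delta_\bfb(\bfg)\times\nabla_\bfb(\bfg)$. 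Therefore the maximum over $\mathcal{J}_{\bfb,\bfg}$ is a maximum over finitely many sequences each tending to $0$, so there exists $N_0$ with $\max_{I\in\mathcal{J}_{\bfb,\bfg}}|S_N(\myone_I-\lambda_s(I),\omega)|<\epsilon/2$ for all $N\ge N_0$. Combining the two bounds yields $|S_N(\myone_J-\lambda_s(J),\omega)|<\epsilon$ for $N\ge N_0$, and since $J$ and $\epsilon$ were arbitrary, (\ref{definition:udmod1}) holds and $\omega$ is uniformly distributed.

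The only delicate point—the step I would flag as the crux—is the interchange of limits implicit in passing from ``each discrete discrepancy vanishes'' to ``the full discrepancy vanishes'': the resolution $\bfg$ must be chosen \emph{before} sending $N\to\infty$, and the argument works precisely because the finiteness of $\mathcal{J}_{\bfb,\bfg}$ makes the maximum over that class well behaved under the limit. The error term $\epsilon_\bfb(\bfg)$ is controlled uniformly in $N$ (it depends only on $\bfg$), which is what decouples the two quantifiers and lets the $\epsilon/2$ argument go through. Everything else is routine bookkeeping on top of Theorem \ref{theorem:discrepancy1} and Remark \ref{rem:bound1}.
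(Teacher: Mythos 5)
Your proof is correct and follows exactly the route the paper intends: the forward direction via the inclusion $\mathcal{J}_\bfb\subseteq\mathcal{J}$, and the converse via Inequality (\ref{eqn:upperbound01}), choosing the resolution $\bfg$ first to make $\epsilon_\bfb(\bfg)$ small and then exploiting the finiteness of $\mathcal{J}_{\bfb,\bfg}$ to send $N\to\infty$. The paper leaves this as an immediate consequence of (\ref{eqn:upperbound01}); you have simply written out the details, including the correct order of quantifiers.
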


The argument to approximate the discrepancies $D_N$ and $D_N^*$ by a hybrid spectral test goes as follows.
Let $\bfb=(b_1, \ldots, b_s)$ be a vector of $s$ not necessarily distinct integers $b_i\ge 2$.
As index set, we choose $\Lambda= \N_0^s\times\N^s$ and
observe that
\[
\Lambda = \bigcup_{\bfg\in \N_0^s} \left( \Delta_\bfb(\bfg)\times \nabla_\bfb(\bfg) \right).
\]
An index point $(\bfa,\bfd)\in \Lambda$ is called \emph{admissible} if there exists $\bfg\in \N_0^s$ such that
$(\bfa, \bfd)\in \Delta_\bfb(\bfg)\times \nabla_\bfb(\bfg)$ and
the interval $I_{\bfa,\bfd;\bfg}$ belongs to $\mathcal{J}_{\bfb,\bfg}$.
The point  $(\bfa,\bfd)\in \Lambda$ is called \emph{non-admissible} otherwise.
The reader should note that different admissible points may produce the same $\bfb$-adic interval.


As the elements of the functions system $\myF=\{ \xi_{(\bfa, \bfd)}: (\bfa, \bfd)\in \Lambda \}$,
we choose the functions $\myone_I - \lambda_s(I)$, with $I\in \mathcal{J}_\bfb$, or the constant
function $0$, subject to the following parametrization.
For an admissible index $(\bfa,\bfd)\in\Lambda$,
let
\[
\xi_{(\bfa,\bfd)} = \myone_{I_{\bfa,\bfd;\bfg}} - \lambda_s(I_{\bfa,\bfd;\bfg} ).
\]
It follows that $\myF$ contains all functions $1_I - \lambda_s(I)$,
with $I\in \mathcal{J}_\bfb$.
If $(\bfa,\bfd)$ is non-admissible,
define  $\xi_{(\bfa,\bfd)}$ to be identically $0$.
Corollary \ref{cor:udmod1} implies that $\myF$ is u.d.d.

For a given $\bfg\in \N^s$,
we define the weight function $\rho_\bfg$ on $\Lambda$ in the following manner.
If $k\in \N_0$,
with $b$-adic representation $k=k_0 + k_1 b + \cdots$,
we put
\[
v_b(k) =
  \begin{cases}
   0, & \text{if } k=0, \\
   1 + \max\{j: k_j \neq 0\}, & \text{if } k\ge 1.
  \end{cases}
\]
For $(\bfa,\bfd) \in \Lambda$,
$\bfa=(a_1, \ldots, a_s)$, $\bfd=(d_1, \ldots, d_s) $,
let
\begin{equation}\label{def:rho-g}
\rho_\bfg\left((\bfa,\bfd)\right) =
    \begin{cases}
 1, & \text{if } (\bfa, \bfd)\in \Delta_\bfb(\bfg)\times \nabla_\bfb(\bfg),\\
  \prod_{i=1}^s b_i^{-(v_{b_i}(a_i)+v_{b_i}(d_i))},      & \text{otherwise.}
  \end{cases}
\end{equation}
Further, we choose the maximum norm on $\R^s$.
Then $\rho_\bfg$ is a weight function in the sense of Definition \ref{def:weightfunction}.

\begin{theorem}
Let $\Lambda$ and $\myF$ be as above.
For every $\epsilon>0$, there exists  an integer vector $\bfg\in\N^s$ such that
for any sequence $\omega$ in $[0,1)^s$,
the spectral test $\sigma_N(\omega)$ of the first $N$ elements of $\omega$,
with respect to $\myF$ and $\rho_\bfg$,
has the property
\[
|\sigma_N(\omega) - D_N(\omega)|< \epsilon.
\]
\end{theorem}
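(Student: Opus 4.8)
The plan is to show that the spectral test $\sigma_N(\omega)$ built from the function system $\myF$ and the weight $\rho_\bfg$ agrees with the discrete discrepancy $D_{N;\bfb,\bfg}(\omega)$, and then invoke Theorem~\ref{theorem:discrepancy1} together with Remark~\ref{rem:bound1} to control the difference from the true discrepancy $D_N(\omega)$. First I would compute the normalizing constant $\sup_{(\bfa,\bfd)\in\Lambda^*}\rho_\bfg((\bfa,\bfd))$. Since $\rho_\bfg$ equals $1$ exactly on the finite set $\Delta_\bfb(\bfg)\times\nabla_\bfb(\bfg)$ and is a product of strictly negative powers of the $b_i$ (hence $<1$) elsewhere, this supremum equals $1$, so the normalization factor drops out and $\sigma_N(\omega)=\sup_{(\bfa,\bfd)\in\Lambda^*}\rho_\bfg((\bfa,\bfd))\,|S_N(\xi_{(\bfa,\bfd)},\omega)|$.

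Next I would split the supremum according to the two branches of (\ref{def:rho-g}). On the block $\Delta_\bfb(\bfg)\times\nabla_\bfb(\bfg)$, the weight is $1$ and the functions $\xi_{(\bfa,\bfd)}$ range exactly over $\myone_I-\lambda_s(I)$ for $I\in\mathcal{J}_{\bfb,\bfg}$ (every interval of resolution $\bfg$ is admissible, and, as noted in the text, several indices may give the same interval, which is harmless for a supremum). Hence the contribution of this block is precisely $D_{N;\bfb,\bfg}(\omega)$, using the definition of the discrete discrepancy. For the complementary, ``otherwise'' indices, I would bound $|S_N(\xi_{(\bfa,\bfd)},\omega)|\le\|\xi_{(\bfa,\bfd)}\|_\infty\le 1$, so each such term is at most $\rho_\bfg((\bfa,\bfd))=\prod_{i=1}^s b_i^{-(v_{b_i}(a_i)+v_{b_i}(d_i))}$. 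The main work is to verify that this geometric tail is itself no larger than the discrepancy-type error term already appearing in Theorem~\ref{theorem:discrepancy1}; I expect this estimate to be the principal obstacle, since one must show the off-block weights never exceed what the block already contributes or what the error term $\epsilon_\bfb(\bfg)$ allows.

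To carry this out I would argue that for a non-admissible index the associated function is identically $0$, contributing nothing, while for an admissible index lying outside the block $\Delta_\bfb(\bfg)\times\nabla_\bfb(\bfg)$ the interval $I_{\bfa,\bfd;\bfg'}$ has some resolution $\bfg'$ with $g_i'>g_i$ in at least one coordinate, so at least one factor $b_i^{-(v_{b_i}(a_i)+v_{b_i}(d_i))}$ is bounded by $b_i^{-g_i}\le\delta_\bfg$. Combining this with the trivial bound $|S_N|\le 1$ shows the entire off-block contribution is at most a quantity of order $\delta_\bfg$, comparable to the bound $\epsilon_\bfb(\bfg)\le 2s\delta_\bfg$ of Remark~\ref{rem:bound1}. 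Consequently
\[
D_{N;\bfb,\bfg}(\omega)\le\sigma_N(\omega)\le\max\{D_{N;\bfb,\bfg}(\omega),\,c\,\delta_\bfg\},
\]
for a constant $c$ depending only on $s$ and the shape of $\rho_\bfg$.

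Finally I would combine this sandwich with Theorem~\ref{theorem:discrepancy1}, which gives $D_{N;\bfb,\bfg}(\omega)\le D_N(\omega)\le\epsilon_\bfb(\bfg)+D_{N;\bfb,\bfg}(\omega)$ and $\epsilon_\bfb(\bfg)\le 2s\delta_\bfg$. Both $\sigma_N(\omega)$ and $D_N(\omega)$ then lie within $O(\delta_\bfg)$ of the common quantity $D_{N;\bfb,\bfg}(\omega)$, whence $|\sigma_N(\omega)-D_N(\omega)|\le C(s)\,\delta_\bfg$ with a constant independent of $\omega$ and $N$. Since $\delta_\bfg=\max_i b_i^{-g_i}\to 0$ as the entries of $\bfg$ grow, given any $\epsilon>0$ I would simply choose each $g_i$ large enough that $C(s)\,\delta_\bfg<\epsilon$, which yields the claimed uniform approximation and proves the theorem. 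The crucial point, and the step I would verify most carefully, is the uniformity of the constant $C(s)$ in both $\omega$ and $N$, which follows because every bound used—the normalization, the block identification, and the tail estimate—is purely structural and does not reference the particular sequence.
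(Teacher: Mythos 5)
Your proposal is correct and follows essentially the same route as the paper: split the supremum defining $\sigma_N(\omega)$ into the block $\Delta_\bfb(\bfg)\times\nabla_\bfb(\bfg)$, where the weight is $1$ and the supremum reproduces $D_{N;\bfb,\bfg}(\omega)$ exactly, and the complementary indices, whose weighted contribution is at most $\max_i b_i^{-1-g_i}$, then combine with Theorem \ref{theorem:discrepancy1} and Remark \ref{rem:bound1} and choose $\bfg$ with $\delta_\bfg$ small. The only difference is cosmetic: the paper fixes the explicit threshold $\max_i b_i^{-g_i}<\epsilon/(4s)$ at the outset rather than carrying a generic constant $C(s)$.
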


\begin{proof}
Let $\epsilon>0$ be given.
We choose $\bfg\in \N^s$ such that
\[
\max_{1\le i\le s} b_i^{-g_i} < \epsilon / (4s).
\]
For the function system $\myF$ and for the
weight function $\rho_\bfg$ defined in (\ref{def:rho-g}),
we have
\begin{align*}
\sigma_N(\omega) = &\max \left\{
    \max \{|S_N(\xi_{(\bfa,\bfd)}, \omega)|: (\bfa,\bfd)\in
        \Delta_\bfb(\bfg)\times \nabla_\bfb(\bfg) \}, \right.  \\
    & \ \ \qquad \left. \sup\{ \rho_\bfg((\bfa,\bfd))|S_N(\xi_{(\bfa,\bfd)}, \omega)|: (\bfa,\bfd)\not\in
        \Delta_\bfb(\bfg)\times \nabla_\bfb(\bfg) \} \right\}.
\end{align*}
We have
\[
D_{N;\bfb,\bfg}(\omega) =
\max \left\{|S_N(\xi_{(\bfa,\bfd)}, \omega)|: (\bfa,\bfd)\in
\Delta_\bfb(\bfg)\times \nabla_\bfb(\bfg) \right\},
\]
and
\[
\sup\left\{ \rho_\bfg((\bfa,\bfd))|S_N(\xi_{(\bfa,\bfd)}, \omega)|: (\bfa,\bfd)\not\in
        \Delta_\bfb(\bfg)\times \nabla_\bfb(\bfg) \right\} \le
        \max_{1\le i\le s} b_i^{-1-g_i}.
\]
The choice of $\bfg$ implies
\[
D_{N;\bfb,\bfg}(\omega) \le \sigma_N(\omega) \le D_{N;\bfb,\bfg}(\omega) + \epsilon/(4s).
\]
On the other hand,
Theorem \ref{theorem:discrepancy1} and Remark \ref{rem:bound1} yield
\[
D_{N;\bfb,\bfg}(\omega) \le D_N(\omega) \le D_{N;\bfb,\bfg}(\omega) + \epsilon/2.
\]
The result follows.
\end{proof}

\begin{corollary}
In the case of the star discrepancy,
put $\Lambda=\N^s$ and
let  $\myF$ be defined accordingly,
such that it contains all the functions $\myone_I - \lambda_s(I)$, $I\in \mathcal{J}_\bfb^*$.
For every $\epsilon>0$, there exists $\bfg\in\N^s$ such that
for any sequence $\omega$ in $[0,1)^s$,
the spectral test $\sigma_N(\omega)$ of the first $N$ elements of $\omega$,
with respect to $\myF$ and $\rho_\bfg$,
has the property
\[
|\sigma_N(\omega) - D_N^*(\omega)|< \epsilon.
\]
\end{corollary}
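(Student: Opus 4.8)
The plan is to transcribe the proof of the preceding theorem, replacing every ingredient by its counterpart anchored at the origin. Since the star discrepancy only probes intervals $J\in\mathcal{J}^*$ of the form $\prod_{i=1}^s[0,v_i)$, the left endpoint vector $\bfa$ is frozen at $\bfzero$; this collapses the index set from $\N_0^s\times\N^s$ down to $\Lambda=\N^s$, with a single index $\bfd$ now parametrizing the anchored $\bfb$-adic interval $I_{\bfzero,\bfd;\bfg}\in\mathcal{J}_{\bfb,\bfg}^*$. I would take $\xi_\bfd=\myone_{I_{\bfzero,\bfd;\bfg}}-\lambda_s(I_{\bfzero,\bfd;\bfg})$ for the associated anchored interval, so that $\myF$ contains exactly the functions $\myone_I-\lambda_s(I)$ with $I\in\mathcal{J}_\bfb^*$; each such function satisfies $\|\xi_\bfd\|_\infty\le 1$ and $\int\xi_\bfd\,d\lambda_s=0$. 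The weight function is the specialization of (\ref{def:rho-g}) obtained by setting $\bfa=\bfzero$: since $v_{b_i}(0)=0$, this gives $\rho_\bfg(\bfd)=1$ for $\bfd\in\nabla_\bfb(\bfg)$ and $\rho_\bfg(\bfd)=\prod_{i=1}^s b_i^{-v_{b_i}(d_i)}$ otherwise, which is a weight function in the sense of Definition~\ref{def:weightfunction} because at least one $v_{b_i}(d_i)\to\infty$ as $\|\bfd\|_\infty\to\infty$.

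The first genuine step is to verify that $\myF$ is u.d.d., i.e.\ the anchored analogue of Corollary~\ref{cor:udmod1}. Here I would argue that if $\lim_{N\to\infty}S_N(\myone_I-\lambda_s(I),\omega)=0$ for every $I\in\mathcal{J}_\bfb^*$, then for each fixed resolution $\bfg$ the finitely many intervals in $\mathcal{J}_{\bfb,\bfg}^*$ force $\lim_{N\to\infty}D_{N;\bfb,\bfg}^*(\omega)=0$; inserting this into the star part of Theorem~\ref{theorem:discrepancy1} yields $\limsup_{N}D_N^*(\omega)\le\epsilon_\bfb^*(\bfg)$, and letting the coordinates of $\bfg$ tend to infinity (so that $\epsilon_\bfb^*(\bfg)\to 0$) gives $D_N^*(\omega)\to 0$, which is the classical criterion for uniform distribution. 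This is the only part of the argument that is not a mechanical copy, and I expect it to be the main, though modest, obstacle.

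Next I would reproduce the splitting of the spectral test exactly as in the extreme case. Given $\epsilon>0$, choose $\bfg\in\N^s$ with $\max_{1\le i\le s}b_i^{-g_i}<\epsilon/(2s)$ and write $\sigma_N(\omega)$ as the maximum of two suprema: the supremum over $\bfd\in\nabla_\bfb(\bfg)$, where $\rho_\bfg\equiv 1$ and which equals exactly $D_{N;\bfb,\bfg}^*(\omega)$, and the supremum over $\bfd\notin\nabla_\bfb(\bfg)$. On the latter range some coordinate satisfies $d_{i_0}>b_{i_0}^{g_{i_0}}$, hence $v_{b_{i_0}}(d_{i_0})\ge g_{i_0}+1$; combined with $|S_N(\xi_\bfd,\omega)|\le 1$ and $\prod_{j\neq i_0}b_j^{-v_{b_j}(d_j)}\le 1$, this bounds the second supremum by $\max_{1\le i\le s}b_i^{-1-g_i}\le\frac{1}{2}\max_{1\le i\le s}b_i^{-g_i}$. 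Consequently $D_{N;\bfb,\bfg}^*(\omega)\le\sigma_N(\omega)\le D_{N;\bfb,\bfg}^*(\omega)+\frac{1}{2}\max_i b_i^{-g_i}$.

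Finally I would invoke the star estimates of Theorem~\ref{theorem:discrepancy1} and Remark~\ref{rem:bound1}, namely $D_{N;\bfb,\bfg}^*(\omega)\le D_N^*(\omega)\le D_{N;\bfb,\bfg}^*(\omega)+\epsilon_\bfb^*(\bfg)$ with $\epsilon_\bfb^*(\bfg)\le s\max_i b_i^{-g_i}<\epsilon/2$. Both $\sigma_N(\omega)$ and $D_N^*(\omega)$ then lie in the interval $[\,D_{N;\bfb,\bfg}^*(\omega),\,D_{N;\bfb,\bfg}^*(\omega)+\epsilon/2\,)$, so $|\sigma_N(\omega)-D_N^*(\omega)|<\epsilon/2<\epsilon$, as required. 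Apart from the u.d.d.\ check in the second step, the whole argument is the bookkeeping of the preceding proof with $\epsilon_\bfb(\bfg)$ replaced throughout by the smaller quantity $\epsilon_\bfb^*(\bfg)$, which is why the weaker resolution constraint $\max_i b_i^{-g_i}<\epsilon/(2s)$ suffices here.
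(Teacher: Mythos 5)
Your proposal is correct and follows exactly the route the paper intends for this corollary: it is the proof of the preceding theorem transcribed for anchored intervals, with the index set collapsed to $\N^s$, the weight (\ref{def:rho-g}) specialized via $v_{b_i}(0)=0$, and $\epsilon_\bfb(\bfg)$ replaced by the smaller $\epsilon^*_\bfb(\bfg)\le s\max_i b_i^{-g_i}$ from Theorem \ref{theorem:discrepancy1} and Remark \ref{rem:bound1}. Your verification that the anchored system $\myF$ is u.d.d.\ (via $D^*_{N;\bfb,\bfg}\to 0$ for every $\bfg$ and then $\epsilon^*_\bfb(\bfg)\to 0$) is the right anchored analogue of Corollary \ref{cor:udmod1}, and the bookkeeping with the relaxed resolution constraint $\max_i b_i^{-g_i}<\epsilon/(2s)$ is sound.
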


\bibliographystyle{degruyter-plain}


\def\cdprime{$''$} \def\cdprime{$''$} \def\cdprime{$''$}
\providecommand{\bysame}{\leavevmode\hbox to3em{\hrulefill}\thinspace}
\providecommand{\MR}{\relax\ifhmode\unskip\space\fi MR }
\providecommand{\MRhref}[2]{%
  \href{http://www.ams.org/mathscinet-getitem?mr=#1}{#2}
}
\providecommand{\href}[2]{#2}

\end{document}